\DeclareMathOperator\supp{supp}
\newtheorem{theorem}{Theorem}[section]
\newtheorem{lemma}[theorem]{Lemma}
\newtheorem{prop}[theorem]{Proposition}
\newtheorem{corollary}[theorem]{Corollary}
\theoremstyle{definition}
\theoremstyle{remark}
\newtheorem{remark}[theorem]{Remark}
\numberwithin{equation}{section}
\newcommand{\la}{\lambda}
\newcommand{\F}{{\Bbb F}}
\newcommand{\cal}{\mathcal}
\newcommand{\eps}{\varepsilon}
\begin{document}
\setcounter{page}{1}
	
\title[paley inequality]
{Paley's inequality for nonabelian groups  }

\author[C. Y. Chuah]{C. Y. Chuah}

\address{Chian Yeong Chuah, Department of Mathematics, Baylor University, 1301 S University Parks Dr, Waco, TX 76798, USA.}
\email{\textcolor[rgb]{0.00,0.00,0.84}{Chian\_Chuah@baylor.edu}}
	
\author[Y. Han]{Y. Han}
	
\address{Yazhou Han, College of Mathematics and Systems Science, Xinjiang
		University, Urumqi 830046, China}
\email{\textcolor[rgb]{0.00,0.00,0.84}{hyz0080@aliyun.com}}

\author[Z. Liu]{Z. Liu}
\address{Zhenchuan Liu,
Department of Mathematics
Baylor University
1301 S University Parks Dr, Waco, TX 76798, USA.}
\email{\textcolor[rgb]{0.00,0.00,0.84}{Zhen-chuan\_Liu1@baylor.edu}}

\author[T. Mei] {T. Mei }  \address{Tao Mei, Department of Mathematics
Baylor University
1301 S University Parks Dr, Waco, TX 76798, USA.}
\email{\textcolor[rgb]{0.00,0.00,0.84}{tao\_mei@baylor.edu}}

\thanks{{\it 2010 Mathematics Subject Classification:} Primary:  46L52, 46L54. Secondary: 42A55.}

\maketitle
  \begin{abstract} This article studies  Paley's theory for lacunary Fourier series on (nonabelian) discrete groups. 
  The results  unify and generalize  the work of Rudin (\cite{Ru62} Section 8) for abelian discrete groups and the work of   Lust-Piquard and Pisier (\cite{LP91}) for operator valued functions, and provide new examples of Paley sequences and $\Lambda(p)$ sets on free groups.
  
\end{abstract}
 
\section*{Introduction}

Denote by ${\Bbb T}$ the unit circle. Given a lacunary sequence $  (j_k)_{ k\in{\Bbb N}}\in {\Bbb Z}$, i.e. $$\frac {|j_{k+1}|}{j_k}>1+\delta$$ for some $\delta>0$, the classical Khintchine's inequality says that 
\begin{eqnarray*}
(\sum_k |c_k|^2)^\frac12\simeq ^{c_\delta} \|\sum_k c_kz^{j_k}\|_{L^1({\Bbb T})} .
\end{eqnarray*}
This shows that $\ell_2$ embeds into $L^1$. However, the projection $$P: f \mapsto \hat f(j_k)$$ is NOT bounded from $L^1({\Bbb T})$ to $\ell_2$. Here $\hat f$ denotes for the Fourier transform of $f$. 
This can be easily seen by looking at the so-called Riesz products.
 Paley's theory is an improvement of  Khintchine's inequality. It says that,
$$(\sum_k |c_k|^2)^\frac12\simeq ^{c_\delta}\inf \{\|f\|_{L^1}; f\in L^1({\Bbb T}), {\rm supp} \hat f\subset {\Bbb N}, \hat f(j_k)=c_k\}.$$
This shows that  the projection $P$ is bounded from the analytic $L^1$ to $\ell_2$, which has important applications, e.g. to Grothendieck's theory on 1-summing maps.

 Let $H^1({\Bbb T})$ be the real Hardy space on the unite circle, that consists of integrable functions  such that both their analytic and the anti-analytic parts   are integrable. 
 Paley's theory says that 
  \begin{eqnarray}
  (\sum_k |c_k|^2)^\frac12\simeq ^{c_\delta}\inf \{\|f\|_{H^1}; f\in H^1({\Bbb T}), \hat f(j_k)=c_k,\forall j_k\in E\},\label{paley1}
  \end{eqnarray}
  for $E=\{j_k, k\in{\Bbb N}\}\subset{\Bbb Z}$.
Let us call $E\subset {\Bbb N}$ a Paley set if the above equivalence holds for all $(c_k)_k\in \ell_2$.   Rudin proved that $E$ is a Paley set only if $$\sup_{n\in{\Bbb N}} \# E\cap [2^n,2^{n+1}]<C$$ which is equivalent to say that $E$ is a finite union of lacunary sequences. 
  
  By Fefferman-Stein's  $H^1$-BMO duality theory, (\ref{paley1}) has an equivalent formulation that, for any $c_k\in \ell_2$,
 \begin{eqnarray}\label{paley2}
 (\sum_k |c_k|^2)^\frac12\simeq ^{c_\delta}\|\sum_k c_kz^{j_k}\|_{BMO({\Bbb T})}.
  \end{eqnarray}
  Here BMO denotes the bounded mean oscillation  (semi)norm
    $$\|g\|_{BMO}=\sup _I \frac1{|I|}\int_I |g-g_I|$$ with the supremum taking for all arc $I\in {\Bbb T}$.

 In the first part of this article, we give   an interpretation of Paley's theory in the semigroup language which allows an extention to non-abelian discrete groups. 
 Let $P_t, t>0,$ to be the Poisson integral operator that sends $e^{ik\theta}$ to $r^{|k|}e^{ik\theta}$ with $r=e^{-t}$. Here is an equivalent characterization of the  classical BMO and $H^1$-norms by $P_t$'s. That, for $f\in L^1({\Bbb T}$),
 \begin{eqnarray*}
 \|f\|_{BMO}\simeq \sup_t\|(P_t|f-P_tf|^2)\|_{L^\infty({\Bbb T})}^\frac12\\
 \|f\|_{H^1}\simeq \|(\int_0^\infty|\partial P_tf|^2tdt)^\frac12\|_{L^1({\Bbb T})}^\frac12.
 \end{eqnarray*}

Consider a discrete group $G$ and a conditionally negative   length  $\psi$ on $G$.  By that, we mean  $\psi$ is a ${\Bbb R}_+$-valued function on $G$ satisfying $\psi(g)=0$ iff $g=e$, $\psi(g)=\psi(g^{-1})$,  and
\begin{eqnarray}\label{CN}
  \sum_{g,h}\overline{a_g}a_h\psi(g^{-1}h)\leq0
\end{eqnarray}
for any finite collection of coefficients $a_g\in {\Bbb C}$ with $\sum_g a_g=0$. 
We say a sequence $h_k\in G$ is $\psi$-lacunary if there exists a constant $\delta>0$ such that
\begin{eqnarray}
   \psi(h_{k+1}) &\geq& (1 + \delta){\psi(h_k)} \label{psi1} \\
 \psi(h_k^{-1}h_{k'})&\geq& \delta\max\{\psi(h_k),\psi(h_{k'})\}.\label{psi2}
 \end{eqnarray} 
 for any $k,k'$.   Note the second condition follows from the first one if we require $\psi$ is sub-additive, i.e. $\psi(hg)\leq C\psi(h)+\psi(g)$. Let $\la$ be the regular left representation of $G$. We say $$x=\sum_k c_k\la_{h_k}$$ is a $\psi$-lacunary operator valued-Fourier series if the sequence $h_k$ is $\psi$-lacunary. We allow $c_k$ to take value in $B(H)$ (or another von Neumann algebra), so $c_k\la_{h_k}$ is understood as $c_k\otimes\la_{h_k}$ in that case. We say $ x $ is a  lacunary Fourier series if there is a conditionally negative $\psi$ so that  $h_k$ is $\psi$-lacunary. 

  
   Let $$T_t:\la_g\mapsto e^{-t\psi(g)}\la_g$$ be the semigroup associated with $\psi$. We will show that,

   \begin {theorem} \label{theorem1}Assume $(h_k)$ is a $\psi$-lacunary sequence. Then, for any sequence $c_k\in B(H)$,
 \begin{eqnarray}
 \|\sum_k c_k\la_{h_k}\|_{BMO_c(\psi)}^2&\simeq^{ c_\delta }&\|\sum_k |c_k|^2 \|. \label{BMOc1}\\
\inf \{\|x\|_{H^1_c(\psi)}; \hat x(h_k)=c_k,\forall k\in{\Bbb N}\}&\simeq^{ c_\delta}& tr (\sum_k |c_k|^2)^\frac12. \label{H1}
 \end{eqnarray}
  \end{theorem}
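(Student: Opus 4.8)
The plan is to prove the $BMO_c$ estimate (\ref{BMOc1}) first, by a direct computation with the semigroup $T_t$, and then to deduce the $H^1_c$ estimate (\ref{H1}) from it by duality. Write $x=\sum_k c_k\la_{h_k}$; it suffices to treat finitely supported $(c_k)$ and pass to the limit at the end.

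\emph{The estimate (\ref{BMOc1}).} Fix $t>0$ and put $w_k=1-e^{-t\psi(h_k)}\in[0,1]$. Since $\psi(h_k^{-1}h_k)=0$, applying $T_t$ to $|x-T_tx|^2=\sum_{k,k'}w_kw_{k'}\,c_k^*c_{k'}\otimes\la_{h_k^{-1}h_{k'}}$ gives
\[
T_t\bigl(|x-T_tx|^2\bigr)=\sum_k w_k^2\,|c_k|^2\;+\;\sum_{m\neq0}D_m,\qquad D_m=\sum_k a_{k,k+m}\,c_k^*c_{k+m}\otimes\la_{h_k^{-1}h_{k+m}},
\]
with $a_{k,k'}=w_kw_{k'}e^{-t\psi(h_k^{-1}h_{k'})}$. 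The diagonal term has norm at most $\|\sum_k|c_k|^2\|$. Since $D_{-m}=D_m^*$, it remains to bound $\|D_m\|$ for $m\ge1$. Writing $D_m=\sum_k u_k^*v_k$ with $u_k=a_{k,k+m}^{1/2}\,c_k\otimes\la_{h_k}$ and $v_k=a_{k,k+m}^{1/2}\,c_{k+m}\otimes\la_{h_{k+m}}$, the operator Cauchy--Schwarz inequality $\|\sum u_k^*v_k\|\le\|\sum u_k^*u_k\|^{1/2}\|\sum v_k^*v_k\|^{1/2}$ together with $\sum_k u_k^*u_k=\bigl(\sum_k a_{k,k+m}c_k^*c_k\bigr)\otimes1\preceq(\sup_k a_{k,k+m})\bigl(\sum_k|c_k|^2\bigr)\otimes1$ (and the analogous bound for $v_k$) yields $\|D_m\|\le(\sup_k a_{k,k+m})\,\|\sum_k|c_k|^2\|$. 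Here both lacunarity hypotheses enter: for $m\ge1$, (\ref{psi1})--(\ref{psi2}) give $\psi(h_k^{-1}h_{k+m})\ge\delta\,\psi(h_{k+m})\ge\delta(1+\delta)^m\psi(h_k)$, while $w_k\le\min\{1,t\psi(h_k)\}$, so $a_{k,k+m}\le\min\{1,s\}e^{-\delta(1+\delta)^m s}$ with $s=t\psi(h_k)$; maximising in $s$ gives $\sup_{k,t}a_{k,k+m}\le c_\delta(1+\delta)^{-m}$ for $m$ large, hence $\sum_{m\neq0}\sup_{k,t}a_{k,k+m}\le c_\delta$. Summing over $m$ bounds the off-diagonal part uniformly in $t$, so $\|x\|_{BMO_c(\psi)}^2\le c_\delta\|\sum_k|c_k|^2\|$. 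The reverse inequality is immediate: applying $\mathrm{id}\otimes\tau$ (the canonical trace of the group factor, a contraction onto $B(H)$) annihilates every $D_m$, so $\|T_t(|x-T_tx|^2)\|\ge\|\sum_k w_k(t)^2|c_k|^2\|$; letting $t\to\infty$ (so $w_k(t)\to1$) gives $\|x\|_{BMO_c(\psi)}^2\ge\|\sum_k|c_k|^2\|$.

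\emph{Deducing (\ref{H1}).} By (\ref{BMOc1}), the map $J\colon(d_k)_k\mapsto\sum_k d_k\la_{h_k}$ is, up to the constant $c_\delta$, an isomorphic embedding of the column space $\{(d_k):\|\sum_k d_k^*d_k\|<\infty\}$ into $BMO_c(\psi)$. Under the duality $H^1_c(\psi)^*=BMO_c(\psi)$ with bracket $(y,z)\mapsto(tr\otimes\tau)(yz^*)$, the preadjoint of $J$ is the coefficient map $R\colon x\mapsto(\hat x(h_k))_k$, landing in the trace class with norm $tr(\sum_k|\hat x(h_k)|^2)^{1/2}$. Boundedness of $R$ — which is exactly boundedness of $J=R^*$ — gives $tr(\sum_k|\hat x(h_k)|^2)^{1/2}\le c_\delta\|x\|_{H^1_c(\psi)}$, the $\gtrsim$ direction of (\ref{H1}); and since $J$ is an isomorphic embedding, $R$ is (quantitatively) surjective, i.e.\ $\inf\{\|x\|_{H^1_c(\psi)}:\hat x(h_k)=c_k\}\le c_\delta\,tr(\sum_k|c_k|^2)^{1/2}$, the $\lesssim$ direction. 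Concretely, the $\gtrsim$ direction follows by pairing any such $x$ against $y=\sum_k c_k(a+\eps)^{-1/2}\la_{h_k}$, $a=\sum_k|c_k|^2$, since $\sum_k|\hat y(h_k)|^2\preceq1$ forces $\|y\|_{BMO_c(\psi)}\le c_\delta$ by (\ref{BMOc1}), while $(tr\otimes\tau)(xy^*)=tr\bigl(a(a+\eps)^{-1/2}\bigr)\uparrow tr(a^{1/2})$ as $\eps\to0$; and the $\lesssim$ direction can alternatively be obtained by hand, taking $x=\sum_k c_k\la_{h_k}$ and running the same Cauchy--Schwarz/lacunarity estimate on the square function $\int_0^\infty|\partial_tT_tx|^2t\,dt$ (whose off-diagonal coefficients are $\le(1+\delta)^{-m}$) to get $\int_0^\infty|\partial_tT_tx|^2t\,dt\preceq c_\delta\,a\otimes1$ (conjugate by $(a+\eps)^{-1/2}\otimes1$, bound the operator norm, let $\eps\to0$), then applying operator monotonicity of $s\mapsto s^{1/2}$ and $\tau(1)=1$.

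\emph{Main obstacle.} The only substantive step is the off-diagonal estimate in (\ref{BMOc1}): this is precisely where (\ref{psi1}) and (\ref{psi2}) are both indispensable, conspiring to make $\sum_m\sup_{k,t}a_{k,k+m}$ finite; everything else is operator-space duality and bookkeeping. The points demanding care are that the chosen definitions of $BMO_c(\psi)$ and $H^1_c(\psi)$ really are described by the oscillation and square functions used above (and that the $H^1_c$--$BMO_c$ duality is available with operator-valued coefficients), and the harmless regularisation replacing $a=\sum_k|c_k|^2$ by $a+\eps$, needed because $a$ need not be invertible.
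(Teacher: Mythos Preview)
Your argument is correct, and for the $BMO_c$ estimate it is essentially the paper's approach: both compute $T_t|x-T_tx|^2=\sum_{k,j}a_{k,j}(c_k\la_{h_k})^*c_j\la_{h_j}$ and control the off-diagonal via Cauchy--Schwarz and the summability forced by (\ref{psi1})--(\ref{psi2}). The only cosmetic difference is bookkeeping: you organise by diagonals $D_m$ and sum $\sum_m\sup_k a_{k,k+m}$, while the paper applies a single Cauchy--Schwarz (its Lemma~\ref{CS}) and bounds the Schur quantity $\sup_j\sum_k a_{k,j}$ directly. The lower bound via $id\otimes\tau$ and $t\to\infty$ is identical.

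For the $H^1_c$ estimate the routes genuinely diverge. You invoke the duality $H^1_c(\psi)^*=BMO_c(\psi)$: even in your ``concrete'' version, the step $|(tr\otimes\tau)(xy^*)|\le\|x\|_{H^1_c}\|y\|_{BMO_c}$ is precisely that duality inequality, and you rightly flag it as a point demanding care. The paper bypasses this entirely. It first proves a separate lemma (Lemma~\ref{Lemma}) giving the $L^\infty$ square-function bound $\|\int_0^\infty|\partial_sT_sy|^2s\,ds\|\le c_\delta\|\sum_k|b_k|^2\|$ for lacunary $y=\sum_k b_k\la_{h_k}$ (same off-diagonal mechanism as the $BMO$ proof). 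Then, for the $\gtrsim$ direction, it picks $b_k$ with $\|\sum|b_k|^2\|=1$ realising $tr(\sum|c_k|^2)^{1/2}=tr\sum c_k^*b_k$ (this is just $S^1(\ell^2_c)$--$B(H)(\ell^2_c)$ duality, not $H^1$--$BMO$), writes the reproducing identity
\[
(\tau\otimes tr)(x^*y)=4\,\tau\otimes tr\int_0^\infty\partial_sT_sx^*\,\partial_sT_sy\,s\,ds,
\]
and applies the $L^1$--$L^\infty$ Cauchy--Schwarz for square functions together with Lemma~\ref{Lemma}. The $\lesssim$ direction in the paper is the trivial trace inequality $\tau(\cdot)^{1/2}\le(\tau\,\cdot)^{1/2}$ applied to the diagonal (no lacunarity needed there), which is slightly simpler than your operator-monotonicity route. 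The upshot: your approach is cleaner conceptually but imports the semigroup $H^1_c$--$BMO_c$ duality as a black box; the paper's is more self-contained, needing only the reproducing formula and the elementary Lemma~\ref{Lemma}.
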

\noindent  Here the semigroup-$H^1$ and BMO-norms are defined as
  \begin{eqnarray*} 
  \|x\|_{H^1_c(\psi)}=tr\otimes\tau (\int_0^\infty |\frac{\partial_s T_sx}{\partial s}|^2sds)^\frac12
 \\
   \|x\|_{{\rm BMO}_c(\psi)}=\sup_s \|T_s|x-T_sx|^2\|^\frac12.
  \end{eqnarray*}
  with $tr, \tau$   the canonical traces on $B(H)$ and the reduced $C^*$ algebra of $G$. By taking adjoints, one gets the estimate on the full BMO spaces and obtain that every set of a $\psi$-lacunary sequence is a $\Lambda_p$ set for  all $2<p<\infty$ by interpolation (\cite{JM12}). More precisely, we have that, for any $p>2, x=\sum_kc_k\otimes\la _  {h_k}$ with $ c_k$ belonging to the Schatten-p classes, 
\begin{eqnarray}
\ \  \|x\|_{L^p(\hat G, S^p)}\leq c^{\frac {p-2}{2p}}_\delta p\max\{\|(\sum_k |c_k|^2)^\frac12 \|_{S^ p},\|(\sum_k |c_k^*|^2)^\frac12 \|_{S^ p}\}.
 \end{eqnarray}

  (\ref{BMOc1}) and its adjoint version  also imply that the row and column semigroup BMO norms differ from each other with constants at least $\simeq\sqrt n$  for $n$ by $n$ matrix-valued functions (see Remark \ref{finalremark}).  
  
  Lust-Piquard and Pisier's work  (\cite{LP91}) is the first in the study of noncommutative  Paley's inequality. They overcomed the difficulties due to the noncommutativity by combining the row and column spaces. An interesting point of  Theorem \ref{theorem1} is that it also gives  interpretations of the row (and column) version of noncommutative Paley's inequality separately. 
  
 In the second part of the article, we assume the group $G$ is equipped with  a bi-invariant order ``$\leq$". Let $G_+=\{g\in G;  e\leq g\}$. Following Rudin's terminology \cite{Ru62}, we say a subset $E\in G_+$ is lacunary if  there exists a constant $K$ such that
$$N(E)=\sup_{g\in G_+}\#\{h\in E: g\leq h\leq g^2\}\leq K.$$ 
 
\begin{theorem}\label{theorem2}
For any sequence $c_k\in S^1(H)$, and
any sequence $\{g_k\}_{k=1}^\infty$ in a lacunary subset $E\subset G_+$, we have
 \begin{eqnarray}
\inf \{tr\otimes\tau |x|; \hat x(g_k)=c_k, \supp\hat x\subset G_+\}&\simeq  \|(c_k)\|_{S^1(\ell^2_{cr})}, 
 \end{eqnarray}
\end{theorem}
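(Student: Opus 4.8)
The plan is to prove the two sides of the equivalence separately, in both cases first reducing to a single \emph{super-lacunary} sequence via Rudin's splitting trick \cite{Ru62}. Since $K:=N(E)<\infty$, list $E=\{g_1\le g_2\le\cdots\}$ and set $E_j=\{g_j,g_{j+K},g_{j+2K},\dots\}$ for $1\le j\le K$: if two consecutive members $g\le g'$ of some $E_j$ had $g'<g^2$, the whole block between them would lie in $[g,g^2]$ and would contain $K+1$ points of $E$, contradicting $N(E)\le K$. Hence each $E_j$ is super-lacunary, i.e.\ $g'\ge g^2$ for consecutive members. This drives both estimates — the upper bound for the infimum by treating the single series $\sum_kc_k\la_{g_k}$ block-by-block along the $E_j$, and the (harder) lower bound by adding, on the dual side, the sub-functionals attached to the $E_j$ at the cost of a factor $K$. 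In each block the super-lacunarity of $E_j$ will be upgraded to $\psi_j$-lacunarity in the sense of (\ref{psi1})--(\ref{psi2}) for a suitable conditionally negative length $\psi_j$ on $G$; producing such lengths, adapted to the order, is part of the work (see the last paragraph).

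For the upper bound one takes the lacunary series itself: given $(c_k)\in S^1(\ell^2_{cr})$, set $x_0=\sum_kc_k\la_{g_k}$, so $\hat x_0(g_k)=c_k$ and $\supp\hat x_0\subset G_+$ because every $g_k\in G_+$. Split $c_k=a_k+b_k$ with $\|(\sum_ka_k^*a_k)^{1/2}\|_{S^1}+\|(\sum_kb_kb_k^*)^{1/2}\|_{S^1}\le 2\|(c_k)\|_{S^1(\ell^2_{cr})}$. Using the bounded inclusions $H^1_c(\psi_j)\hookrightarrow L^1$ and $H^1_r(\psi_j)\hookrightarrow L^1$ together with the direct evaluation of the column/row square function of a $\psi_j$-lacunary series (the easy half of the proof of Theorem~\ref{theorem1}, where the off-diagonal terms are absorbed by (\ref{psi2})), one gets $\|\sum_{g_k\in E_j}a_k\la_{g_k}\|_{H^1_c(\psi_j)}\lesssim\|(\sum_{g_k\in E_j}a_k^*a_k)^{1/2}\|_{S^1}$ and symmetrically for the $b_k$ and the row square function; summing the $K$ blocks yields $tr\otimes\tau\,|x_0|\lesssim_K\|(c_k)\|_{S^1(\ell^2_{cr})}$, which bounds the infimum.

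For the lower bound I would dualize. When the order is total, $\mathcal A=\{x\in\mathcal L(G):\supp\hat x\subset G_+\}$ is a unital weak-$*$ closed algebra (indeed a finite maximal subdiagonal subalgebra of $\mathcal L(G)$), $L^1(\mathcal A)=\{x\in L^1(\hat G):\supp\hat x\subset G_+\}$, and $L^1(\mathcal A)^*$ is $L^\infty(\hat G)$ modulo the co-analytic ideal $\{y:\supp\hat y\cap G_+=\varnothing\}$. The assertion $\|(\hat x(g_k))_k\|_{S^1(\ell^2_{cr})}\lesssim tr\otimes\tau\,|x|$ for $x\in L^1(\mathcal A)$ then becomes: for each $(d_k)$ in the unit ball of $S^\infty(\ell^2_c)\cap S^\infty(\ell^2_r)$ there is $y\in L^\infty(\hat G)$ with $\hat y|_E=(d_k)$, $\hat y|_{G_+\setminus E}=0$ and $\mathrm{dist}_{L^\infty}(y,\text{co-analytic})\lesssim 1$. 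For a single super-lacunary $E$, set $z=\sum_kd_k\la_{g_k}$. Equation (\ref{BMOc1}) of Theorem~\ref{theorem1} and its adjoint (row) version give $\|z\|_{\mathrm{BMO}_c(\psi)}\lesssim\|(\sum_kd_k^*d_k)^{1/2}\|$ and $\|z\|_{\mathrm{BMO}_r(\psi)}\lesssim\|(\sum_kd_kd_k^*)^{1/2}\|$, so $z$ sits in the $\mathrm{BMO}(\psi)$-ball of radius $\lesssim\|(d_k)\|$. The Fefferman--Stein-type duality $H^1(\psi)^*=\mathrm{BMO}(\psi)$ from Theorem~\ref{theorem1}, combined with the identification $\|\cdot\|_{H^1(\psi)}\simeq\|\cdot\|_{L^1}$ on the analytic class $L^1(\mathcal A)$, then gives $\mathrm{dist}_{L^\infty}(z,\text{co-analytic})=\|z\|_{L^1(\mathcal A)^*}\lesssim\|z\|_{\mathrm{BMO}(\psi)}$, and one takes for $y$ the bounded near-optimal representative.

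The main obstacle is the interface between the semigroup $T_s=e^{-s\psi}$ and the order: one needs, simultaneously, a conditionally negative $\psi$ for which each super-lacunary $E_j$ is $\psi$-lacunary in the sense (\ref{psi1})--(\ref{psi2}), \emph{and} the compatibility $\|\cdot\|_{H^1(\psi)}\simeq\|\cdot\|_{L^1}$ on $L^1(\mathcal A)$ — a noncommutative F.~and~M.~Riesz / Helson--Lowdenslager phenomenon expressing that, on the analytic class, the semigroup $H^1(\psi)$-norm collapses to the $L^1$-norm. For $\mathbb{Z}$ and ordered abelian groups with the classical Poisson semigroup this is standard; in general it must be set up from the order, and I expect this to be the delicate point. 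An alternative that bypasses $\psi$ entirely is to develop the noncommutative $H^p$-theory of $\mathcal A$ directly — Riesz factorization $H^1(\mathcal A)=H^2(\mathcal A)\cdot H^2(\mathcal A)$ and $H^2$--$H^2$ duality following Blecher--Labuschagne — and run Rudin's bilinear Cauchy--Schwarz estimate on $\hat x(g_k)=\sum_{gh=g_k}\hat y(g)\hat z(h)$ for $y,z\in H^2(\mathcal A)$, using the lacunarity of $E_j$ to disjointify the frequency bands that carry the mass and the row/column splitting of Lust-Piquard and Pisier \cite{LP91} to absorb the noncommutativity; this route makes the space $S^1(\ell^2_{cr})$ appear on the right for exactly the reason it does in \cite{LP91}.
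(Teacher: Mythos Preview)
Your main approach --- manufacturing for each super-lacunary block $E_j$ a conditionally negative length $\psi_j$ on $G$ for which $E_j$ is $\psi_j$-lacunary, and then invoking the semigroup $H^1/\mathrm{BMO}$ machinery of Theorem~\ref{theorem1} --- has a genuine gap, and you name it yourself. There is no general mechanism producing such a $\psi_j$ from a bi-invariant order, nor any reason to expect the ``noncommutative F.\ and M.\ Riesz'' statement $\|\cdot\|_{H^1(\psi)}\simeq\|\cdot\|_{L^1}$ on the analytic class $L^1(\mathcal A)$. For $G=\mathbb Z$ with the natural order and $\psi(n)=|n|$ both points are classical, but for a general ordered nonabelian $G$ neither is available, and the paper never claims them. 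The dual step you need for the lower bound, $\mathrm{dist}_{L^\infty}(z,\text{co-analytic})\lesssim\|z\|_{\mathrm{BMO}(\psi)}$, is precisely an order--semigroup compatibility that has not been established. Likewise, your upper bound uses an embedding $H^1_c(\psi_j)\hookrightarrow L^1$ which is not part of Theorem~\ref{theorem1}. So as written the main argument does not go through.

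The alternative you sketch in your last paragraph \emph{is} the paper's route, and it needs no $\psi$ and no super-lacunary splitting. The paper invokes the Marsalli--West/Bekjan--Ospanov factorization (Lemma~\ref{MW98}) to write any $x\in H^1(\mathcal N)$ as $x=yz$ with $y,z\in H^2(\mathcal N)$ and $\|y\|_2\|z\|_2\le\|x\|_1+\varepsilon$, expands $\hat x(g_i)=\sum_{e\le h\le g_i}\hat y(h)\,\hat z(h^{-1}g_i)$, and splits this convolution at $h^2=g_i$ into two pieces $A_i,B_i$. A Cauchy--Schwarz/H\"older argument, together with the raw lacunarity bound $N(E,h)\le K$ counting how many $g_i\in E$ can lie in a single band $[h,h^2]$, gives directly $\|(A_i)\|_{L^1(\mathcal M,\ell^2_r)}\le K^{1/2}\|y\|_2\|z\|_2$ and $\|(B_i)\|_{L^1(\mathcal M,\ell^2_c)}\le K^{1/2}\|y\|_2\|z\|_2$, whence $\|(\hat x(g_i))\|_{S^1(\ell^2_{cr})}\le 2K^{1/2}\|x\|_1$. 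The row/column split arises for exactly the reason it does in Lust-Piquard--Pisier: the two halves of the convolution are controlled by different operator-valued square sums. Your Rudin decomposition into $E_1,\dots,E_K$ is therefore superfluous for this argument; the constant $K$ enters only through the band count, not through any reduction to the super-lacunary case.
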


 Theorem \ref{theorem2} follows from a factorization theorem of noncommutative analytic Hardy spaces and   an adaption of   Lust-Piquard and Pisier's argument (\cite{LP91}) to Rudin's terminology of lacunary sets. The authors feel   happy that it works out and provides  interesting examples of Paley sequences and $\Lambda(p)$ sets (see e.g. Corollary \ref{corlast}) on free groups.

  \section{Noncommutative $L^p$- space}
Let $\mathcal{M}$ be a
semifinite von Neumann algebra acting on a seperable Hilbert space $\mathcal{H}$ with
a normal semifinite faithful trace
$\tau$. 
For $0<p<\infty$, denote by $L^p(\mathcal{M})$ the noncommutative $L^p$ spaces associated with the (quasi)norm
$\|x\|_p=\tau(|x|^p)^{\frac{1}{p}}$.  As usual, we set $L^\infty( \mathcal{M})=\mathcal{M}$
equipped with the operator norm.  
 For a (nonabelian) discrete group $G$, the von Neumann algebra is the closure of the linear span of left regular representation $\la_g$'s w.r.t. a weak operator topology. The trace $\tau$ is simply defined as 
 $$\tau x=c_e,$$
 for $x=\sum_g c_g\la_g$.
 The associated $L_p$ norm is defined as $$\|x\|_p=(\tau |x|^p)^\frac1p$$ for   $1\leq p<\infty$.  
 When $G$ is abeliean, e.g. $G={\mathbb Z}^d$, the obtained $L_p$ space is the $L_p$ space on the dual group e.g. $\widehat{{\mathbb Z}^d}={\mathbb T}^d$. 
We will denote by ${\cal G}$ the group von Neumann algebra of $G$, and by $L^p(\hat G)$ the associated noncommutative $L^p$ spaces.
Let $\mathcal{M}=\mathcal{B}(\mathcal{H})$, the algebra of all
bounded operators on $\mathcal{H}$,
and $\tau=tr$, the usual trace on
$\mathcal{B}(\mathcal{H})$. Then the associated $L^p$-space
$L^p(\mathcal{M})$ is the Schatten
class $S^p(\mathcal{H})$. We refer the readers to the survey paper \cite{PX03} for more information on noncommutative $L_p$ spaces.

\subsection{Column and row spaces}

Let $0<p\leq\infty$	and let  $(x_{n})_{n\geq0}$ be a finite sequence in $L^p(\mathcal{M})$.
Define
\[
\|(x_{n})\|_{L^p(\mathcal{M}, \ell_c^2)}=\|(\sum\limits_{n\geq0}
|x_{n}|^{2})^{1/2}\|_p,~~
\|(x_{n})\|_{L^p(\mathcal{M}, \ell_r^2)}=\|(\sum\limits_{n\geq 0}
|x_{n}^{*}|^{2})^{1/2}\|_p.
\]
For $0<p<\infty,$ we define $L^p(\mathcal{M}, \ell^2_c)$(resp. $L^p(\mathcal{M}, \ell^2_r)$)
as the completion of
the family of all finite sequences in $L^p(\mathcal{M})$ with respect to
$\|\cdot\|_{L^p(\mathcal{M}, \ell_c^2)}$(resp. $\|\cdot\|_{L^p(\mathcal{M}, \ell_r^2)}$).
For $p=\infty$, we define $L^\infty(\mathcal{M}, \ell^2_c)$(resp. $L^\infty(\mathcal{M}, \ell^2_r)$)
as the Banach space of (possible infinite) sequences in $\mathcal{M}$ such that
$\sum_{n}x_{n}^*x_n$ (resp. $\sum_{n}x_{n}^*x_n$) converges in the
$w^*$-topology.

Let $0< p\leq\infty.$ We define the space $L^p(\mathcal{M}, \ell_{rc}^2)$ as follows:
 \begin{enumerate}
 \item  If $0<p<2$,
\[L^p(\mathcal{M}, \ell_{rc}^2)=L^p(\mathcal{M}, \ell_c^2)+L^p(\mathcal{M}, \ell_r^2)\]	
equipped with the norm:
\[
\|(x_k)_{n\geq0}\|_{L^p(\mathcal{M}, \ell_{rc}^2)}=\inf_{x_k=x_k^\prime+x_k^{\prime\prime}}
\{\|(x_k^{\prime})\|_{L^p(\mathcal{M}, \ell_c^2)}
+\|(x_k^{\prime\prime})\|_{L^p(\mathcal{M}, \ell_r^2)}\}
\]
where the infimum is taken over all decompositions for which
\[\|(x_k^{\prime})\|_{L^p(\mathcal{M}, \ell_c^2)}<\infty~\mbox{and}~
\|(x_k^{\prime\prime})\|_{L^p(\mathcal{M}, \ell_r^2)}<\infty.\]
\item  If $p\geq2$,
\[L^p(\mathcal{M}, \ell_{rc}^2)=L^p(\mathcal{M}, \ell_c^2)\cap L^p(\mathcal{M}, \ell_r^2)\]	
equipped with the  norm:
\[
\|(x_k)\|_{L^p(\mathcal{M}, \ell_{rc}^2)}=\max
\{\|(x_k)\|_{L^p(\mathcal{M}, \ell_r^2)}, \|(x_k)\|_{L^p(\mathcal{M}, \ell_c^2)}\}.
\]
 \end{enumerate}
 We will denote simply
by $S^p(\ell_c^2)$, $S^p(\ell_r^2)$ and $S^p(\ell_{rc}^2)$ the spaces
$L^p(\mathcal{M}, \ell_c^2)$, $L^p(\mathcal{M}, \ell_r^2)$ and $L^p(\mathcal{M}, \ell_{rc}^2)$
when $\mathcal{M}=\mathcal{B}(\mathcal{H})$, respectively.
Please refer to \cite{PX03, JMX06} for details on these spaces.

We will denote by $K(H)$ the collection of all compact operators on a Hilbert space $H$. The expression $X\lesssim Y$ means that there exists a positive constant $C$ such that
$X\leq CY $. To specify the dependence of this constant on additional parameters e.g. on
$p$ we write $X \lesssim_p Y$. If $X\lesssim Y$ and $Y\lesssim X,$ we write $X\thickapprox Y$.	
	
\section{Proof of Theorem \ref{theorem1}-- the BMO estimate.}

Given a conditionally negative length   $\psi$ on $G$,
Schoenberg's
theorem says that $$T_t: \lambda_g=e^{-t\psi(g)}\lambda_g$$ extends to a symmetric Markov semigroup of operators on the group von Neumann algebra $L^p(\hat G), 1\leq p\leq \infty$.
Following  \cite{JM12} and \cite{M08}, let us set
\begin{eqnarray}
\|x\|_{\mathrm{BMO}_c(\psi ) }&=&\sup_{0<t<\infty}\| T_{t }|x-T_{t}x|^2\| ^\frac12, \label{BMOT}
\end{eqnarray}
for $x\in L^2(\hat G)$. Let ${\mathrm{BMO}(\psi ) }$ be the space of all $ x\in L^2(\hat G)$ such that
\begin{eqnarray}
  \|x\|_{BMO(\psi)}=\max\{\|x\|_{ BMO_c(\psi)},\|x^*\|_{BMO_c(\psi)}\}<\infty.
\end{eqnarray}

When $G$ is the integer group ${\Bbb Z}$, $\hat G$ is the unit torus ${\Bbb T}$. The semigroup $T_t$ is the heat semigroup (resp. Poisson semigroup) if we set   $\psi(g)=|g|^2$ (resp. $|g|$) for $g\in {\Bbb Z}$. It is an elementary calculation that the semigroup BMO norm defined above coincides with the classical one. In the case of operator valued $x$, that is $x=\sum c_h\la_h$ with $c_h$ taking values in another von Neumann algebra, the semigroup BMO norm defined above coincides with the ones studied in \cite{M07}. The semigroup BMO norms may differ from each other for different semigroups, see \cite{FMS19} Section 4 for examples.

 \begin{lemma} \label {JM12}([JM12]) We have the following interpolation result 
$$ [BMO(\psi),L^1(\hat G)]_{\frac{1}{p}} = L^p(\hat G) $$
for
 $1<p<\infty$.     
\end{lemma}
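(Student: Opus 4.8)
The plan is to prove this interpolation identity by combining the general machinery of semigroup-BMO spaces developed in \cite{JM12} with the fact that the semigroup $T_t$ associated to a conditionally negative length $\psi$ is a symmetric Markov semigroup (by Schoenberg's theorem) acting on the group von Neumann algebra, and that it is \emph{standard} in the sense required for the interpolation theorem there. Concretely, I would first recall that in \cite{JM12} the authors establish, for a symmetric Markov semigroup $(T_t)$ on a von Neumann algebra $\M$ with trace $\tau$, the interpolation scale $[\mathrm{BMO}(T), L^1(\M)]_{1/p} = L^p(\M)$ for $1<p<\infty$, provided the semigroup satisfies a mild regularity hypothesis (e.g.\ the Markov dilation / the $\Gamma_2 \geq 0$ type condition, or simply that it is a completely positive unital trace-preserving symmetric semigroup with the right column/row BMO definition). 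The strategy is therefore to verify that $T_t: \la_g \mapsto e^{-t\psi(g)}\la_g$ falls within the class of semigroups covered, quote the abstract theorem, and transport the conclusion to $\M = \mathcal{G}$, $L^p(\M) = L^p(\hat G)$.

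The key steps, in order: (1) Invoke Schoenberg's theorem to guarantee that $(T_t)$ is a symmetric Markov semigroup on $L^p(\hat G)$ for all $1 \le p \le \infty$ — this is already stated in the paragraph preceding the lemma, so it can be cited directly. (2) Observe that $(T_t)$ is automatically trace-symmetric: $\tau((T_t x) y) = \tau(x (T_t y))$, since $T_t$ acts diagonally on the Fourier side and $e^{-t\psi(g)}$ is real-valued with $\psi(g) = \psi(g^{-1})$; this is what makes $\mathrm{BMO}_c(\psi)$ and its row companion dual-compatible with the $L^p$ scale. (3) Check the standardness / regularity hypothesis of the relevant theorem in \cite{JM12} — for group von Neumann algebras with a conditionally negative length, this is precisely the setting treated there (the Markov dilation exists because $\psi$ conditionally negative gives a cocycle / Gaussian realization, so $T_t$ dilates to an automorphism group), so the hypothesis is met. (4) Apply the abstract complex-interpolation theorem of \cite{JM12} to obtain $[\mathrm{BMO}(\psi), L^1(\hat G)]_{1/\theta} = L^{1/\theta}(\hat G)$ and rename $\theta = 1/p$.

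The main obstacle — and the only real content beyond citation-chasing — is step (3): confirming that the particular semigroup $T_t$ here satisfies whatever technical regularity condition the interpolation theorem in \cite{JM12} demands (existence of a (reversed) Markov dilation, or the relevant BMO being defined with the symmetrized ``$|x - T_t x|^2$'' square function rather than a one-sided variant). For conditionally negative $\psi$ this is known to hold because such $\psi$ arises from an affine isometric action / $1$-cocycle, which yields a concrete Markov dilation of $(T_t)$; I would cite this construction (it appears in \cite{JM12} and \cite{M08}) and note that the BMO norm in \eqref{BMOT} is exactly the one used there. Granting that, the lemma is a direct quotation, so the proof I would write is short: set up the identifications $\M = \mathcal{G}$, $T_t = e^{-t\psi}$, verify symmetry and standardness in two lines, and invoke \cite[Theorem]{JM12}. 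I would also remark, for the reader's benefit, that the endpoint $p=\infty$ case ($\mathrm{BMO}(\psi)$ replacing $L^\infty$) is the whole point: $L^\infty(\hat G)$ does not interpolate correctly with $L^1$ against the Riesz-product obstruction, exactly as in the classical scalar story recalled in the introduction, whereas $\mathrm{BMO}(\psi)$ does.
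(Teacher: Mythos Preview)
Your proposal is correct and matches the paper's treatment: the lemma is stated as a quotation from \cite{JM12} with no proof supplied, so the paper simply cites the result. Your plan to verify that $T_t = e^{-t\psi}$ is a symmetric Markov semigroup admitting a Markov dilation (via the cocycle realization of conditionally negative $\psi$) and then invoke the abstract interpolation theorem of \cite{JM12} is exactly the intended reduction; if anything, you are being more explicit than the paper, which leaves the hypothesis-checking implicit.
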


\begin{lemma}\label{CS}
For $a_s\in {\Bbb R}_+$, $c_s,b_s\in B(H)$, we have, for any $0<p,q,r<\infty, \frac1p+\frac1q=\frac1r$,
\begin{eqnarray}
\|\sum_s a_sc^*_sb_s\|&\leq &\|\sum_s |c_s|^2 a_s\|^\frac12\|\sum_s |b_s|^2 a_s\|^\frac12\\
\|\sum_k a_sc^*_sb_s\|_{S^r }&\leq& \|\sum_s |c_s|^2 a_s\|_{S^p}^\frac12\|\sum_s |b_s|^2 a_s\|_{S^q}^\frac12.
\end{eqnarray}
\end{lemma}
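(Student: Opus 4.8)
The plan is to deduce both inequalities from a single algebraic observation: $\sum_s a_s c_s^* b_s$ is a product of two column operators. Take the index set to be a set $S$ that is finite (which is the case in all applications of this paper; in general one simply assumes that the right‑hand sides below are finite, so that the relevant series converge in the strong, resp.\ $w^*$, topology) and introduce the bounded operators
\[
u\colon H\longrightarrow \bigoplus_{s\in S}H,\qquad u(h)=\big(a_s^{1/2}c_s h\big)_{s\in S},
\]
and likewise $v(h)=\big(a_s^{1/2}b_s h\big)_{s\in S}$. Since each $a_s$ is a nonnegative scalar, it commutes with every operator appearing, so a one‑line computation gives
\[
u^*u=\sum_{s\in S}a_s\,c_s^*c_s=\sum_s |c_s|^2 a_s,\qquad
v^*v=\sum_s |b_s|^2 a_s,\qquad
u^*v=\sum_s a_s\,c_s^*b_s .
\]

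The first inequality is then immediate from submultiplicativity of the operator norm: $\big\|\sum_s a_s c_s^* b_s\big\|=\|u^*v\|\le\|u\|\,\|v\|=\|u^*u\|^{1/2}\,\|v^*v\|^{1/2}$, using $\|u\|=\|u^*u\|^{1/2}$ and similarly for $v$. For the Schatten estimate I would apply Hölder's inequality for Schatten norms to the factorization $u^*v$, choosing the conjugate exponents to be $2p$ and $2q$; combined with the elementary identities $\|u^*\|_{S^{2p}}=\|u\|_{S^{2p}}=\|(u^*u)^{1/2}\|_{S^{2p}}=\|u^*u\|_{S^{p}}^{1/2}$ and $\|v\|_{S^{2q}}=\|v^*v\|_{S^{q}}^{1/2}$, this yields
\[
\Big\|\sum_s a_s c_s^*b_s\Big\|_{S^r}=\|u^*v\|_{S^r}\le\|u^*\|_{S^{2p}}\,\|v\|_{S^{2q}}=\|u^*u\|_{S^{p}}^{1/2}\,\|v^*v\|_{S^{q}}^{1/2},
\]
the exponents $p,q,r$ being linked precisely so that this instance of Hölder is legitimate.

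I do not expect a real obstacle here: this is a warm‑up lemma whose content is just the factorization through a column operator plus the two‑line identification of $u^*u$ and $v^*v$, all of which is routine for $\ell^2_c$‑valued sequences in the sense of Section~1. The only points needing a word of care are the standard ones — that $u$ lies in $S^{2p}$ exactly when $\sum_s|c_s|^2 a_s\in S^p$ (immediate from $\|u\|_{S^{2p}}=\|u^*u\|_{S^p}^{1/2}$), and, in the infinite‑index case, the convergence of the defining series in the appropriate topology. If one wishes to bypass column operators entirely, the same two bounds follow from positivity of the $2\times2$ operator matrix $\sum_s a_s\left(\begin{smallmatrix}c_s^*\\ b_s^*\end{smallmatrix}\right)\left(\begin{smallmatrix}c_s & b_s\end{smallmatrix}\right)$ — whose diagonal blocks are $\sum_s|c_s|^2 a_s$ and $\sum_s|b_s|^2 a_s$ and whose $(1,2)$‑block is $\sum_s a_s c_s^* b_s$ — together with the standard estimate of the off‑diagonal block of a positive $2\times2$ operator matrix by its diagonal blocks; but the column‑operator route is the cleanest.
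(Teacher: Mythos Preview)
Your argument is exactly the content of the paper's one-line proof (``This is simply the Cauchy--Schwarz inequality''): the column factorization $u^*v$ is the natural way to make that sentence precise, and both inequalities drop out as you describe.

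One point deserves a correction rather than a gloss. The H\"older step you invoke needs $\frac{1}{2p}+\frac{1}{2q}=\frac{1}{r}$, not the relation $\frac{1}{p}+\frac{1}{q}=\frac{1}{r}$ printed in the lemma. With the printed relation the second inequality is actually false: take a single term with $a=1$ and $c=b=I_n$, $p=q=2$, $r=1$; the left side is $\|I_n\|_{S^1}=n$ while the right side is $\|I_n\|_{S^2}=\sqrt{n}$. So the hypothesis in the statement is a typo (harmless for the paper, which only applies the lemma with $p=q=\infty$), and your sentence ``the exponents $p,q,r$ being linked precisely so that this instance of H\"older is legitimate'' should be replaced by an explicit note that the correct link is $\frac{1}{2p}+\frac{1}{2q}=\frac{1}{r}$.
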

\begin{proof} This is simply the Cauchy-Schwartz inequality.
\end{proof}
   \begin{lemma}\label{Lemma}
   Let $x=\sum_k c_k\la_{h_k}\in L^2(\hat G)$, Then, we have
 \begin{eqnarray}
 \tau( \int_0^\infty|\partial_s T_sx|^2sds)^\frac12&\leq &\frac12 (\sum_k |c_k|^2 )^\frac12. \label{H1<}
 \end{eqnarray}
Moreover, if we assume $(h_k)$ is a $\psi$-lacunary sequence, then
\begin{eqnarray}
  \| \int_0^\infty|\partial_s T_sx|^2sds\|&\leq &c_\delta\|\sum_k |c_k|^2  \|.
 \end{eqnarray}
 \end{lemma}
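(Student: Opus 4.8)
The plan is to compute the square function $\int_0^\infty|\partial_s T_sx|^2s\,ds$ in closed form and then estimate the resulting kernel; we assume, as is implicit in the statement, that the frequencies $h_k$ are pairwise distinct (automatic for a lacunary sequence by (\ref{psi2})). Writing $T_sx=\sum_k e^{-s\psi(h_k)}c_k\otimes\la_{h_k}$ we have $\partial_s T_sx=-\sum_k\psi(h_k)e^{-s\psi(h_k)}c_k\otimes\la_{h_k}$, the indices with $h_k=e$ dropping out since $\psi(e)=0$, and therefore
\[
|\partial_s T_sx|^2=\sum_{k,k'}\psi(h_k)\psi(h_{k'})\,e^{-s(\psi(h_k)+\psi(h_{k'}))}\,c_k^*c_{k'}\otimes\la_{h_k^{-1}h_{k'}}.
\]
Integrating term by term against $s\,ds$ (legitimate for a finite sequence $(c_k)$, the general case following by truncation) and using $\int_0^\infty s\,e^{-as}\,ds=a^{-2}$ for $a>0$, we obtain the closed form
\[
\int_0^\infty|\partial_s T_sx|^2s\,ds=\sum_{k,k'}\beta_{k,k'}\,c_k^*c_{k'}\otimes\la_{h_k^{-1}h_{k'}},\qquad \beta_{k,k'}:=\frac{\psi(h_k)\psi(h_{k'})}{(\psi(h_k)+\psi(h_{k'}))^2},
\]
with $0\le\beta_{k,k'}\le\frac14$. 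This object is the noncommutative counterpart of the Littlewood--Paley square function in Rudin's treatment of the classical Paley inequality.

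For (\ref{H1<}) I would apply $\tau$, understood as $\mathrm{id}_{B(H)}\otimes\tau$, to this identity. Since $\tau(\la_g)=\delta_{g,e}$ and the $h_k$ are distinct, only the diagonal terms survive, and since $\beta_{k,k}=\frac14$ whenever $h_k\neq e$,
\[
(\mathrm{id}\otimes\tau)\Big(\int_0^\infty|\partial_s T_sx|^2s\,ds\Big)=\frac14\sum_{k:\,h_k\neq e}|c_k|^2\ \le\ \frac14\sum_k|c_k|^2 .
\]
Operator monotonicity of $t\mapsto\sqrt t$ then gives (\ref{H1<}).

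For the lacunary estimate I would split off the diagonal part $\frac14\sum_{k:\,h_k\neq e}|c_k|^2\otimes1$, of norm at most $\frac14\|\sum_k|c_k|^2\|$, and group the remaining terms into slices $S_m:=\sum_k\beta_{k,k+m}\,c_k^*c_{k+m}\otimes\la_{h_k^{-1}h_{k+m}}$ over $m\neq0$. Iterating (\ref{psi1}) gives $\psi(h_{k+m})\ge(1+\delta)^{|m|}\psi(h_k)$ for $m\ge0$, and symmetrically for $m<0$, hence
\[
\beta_{k,k+m}\ \le\ \frac{\min\{\psi(h_k),\psi(h_{k+m})\}}{\max\{\psi(h_k),\psi(h_{k+m})\}}\ \le\ (1+\delta)^{-|m|}.
\]
Writing $c_k^*c_{k+m}\otimes\la_{h_k^{-1}h_{k+m}}=(c_k\otimes\la_{h_k})^*(c_{k+m}\otimes\la_{h_{k+m}})$ and applying Lemma~\ref{CS} (whose proof, being Cauchy--Schwarz, is valid in any von Neumann algebra) inside $B(H)\overline{\otimes}{\cal G}$ with weights $\beta_{k,k+m}\ge0$, and using $|c_k\otimes\la_{h_k}|^2=|c_k|^2\otimes1$, I obtain
\[
\|S_m\|\ \le\ \Big\|\sum_k\beta_{k,k+m}|c_k|^2\Big\|^{1/2}\Big\|\sum_k\beta_{k,k+m}|c_{k+m}|^2\Big\|^{1/2}\ \le\ (1+\delta)^{-|m|}\Big\|\sum_k|c_k|^2\Big\| ,
\]
the last step using $0\le\beta_{k,k+m}\le(1+\delta)^{-|m|}$ and a reindexing in the second factor. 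Summing the triangle inequality over $m\neq0$, with $\sum_{m\neq0}(1+\delta)^{-|m|}=2/\delta$, and adding the diagonal part gives $\big\|\int_0^\infty|\partial_s T_sx|^2s\,ds\big\|\le(\frac14+\frac2\delta)\|\sum_k|c_k|^2\|$, which is the claim with $c_\delta=\frac14+\frac2\delta$.

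The only step that needs genuine care is the off-diagonal bound, and its essence is twofold: the explicit kernel $\beta_{k,k'}$ decays geometrically away from the diagonal purely because of the growth condition (\ref{psi1}) --- condition (\ref{psi2}) is not used anywhere in this lemma --- and Lemma~\ref{CS}, applied in $B(H)\overline{\otimes}{\cal G}$ where each factor $c_k\otimes\la_{h_k}$ has modulus $|c_k|^2\otimes1$, converts each slice $S_m$ into a clean two-sided square-function estimate with no loss coming from the group elements $\la_{h_k^{-1}h_{k+m}}$. This is precisely the noncommutative analogue of the Littlewood--Paley plus Cauchy--Schwarz mechanism behind the scalar Paley inequality.
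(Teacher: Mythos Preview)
Your proof is correct and follows the same skeleton as the paper's: compute the closed-form kernel $\beta_{k,k'}=\dfrac{\psi(h_k)\psi(h_{k'})}{(\psi(h_k)+\psi(h_{k'}))^2}$, use the partial trace for the first estimate, and exploit lacunarity plus Cauchy--Schwarz for the second. Two small remarks.

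First, for (\ref{H1<}) the phrase ``operator monotonicity of $\sqrt{\,\cdot\,}$'' is not quite the right justification. What you actually need is the operator Jensen inequality for the conditional expectation $E=\mathrm{id}\otimes\tau$, namely $E(Y^{1/2})\le (EY)^{1/2}$; monotonicity alone only compares $(EY)^{1/2}$ with $\bigl(\tfrac14\sum_k|c_k|^2\bigr)^{1/2}$, not $E(Y^{1/2})$ with either. The paper makes exactly this Jensen step explicit.

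Second, for the lacunary bound the paper applies Lemma~\ref{CS} once over the double index $(k,j)$ and invokes the Schur-type estimate $\sup_j\sum_k\beta_{k,j}\le c_\delta$ directly, whereas you slice by $m=k'-k$, bound each slice via Lemma~\ref{CS}, and sum the geometric series. The two organizations are equivalent; yours has the advantage of producing an explicit constant $c_\delta=\tfrac14+\tfrac2\delta$, while the paper's is a couple of lines shorter. Your observation that only (\ref{psi1}) is used here, not (\ref{psi2}), is correct and matches the paper's later Remark that the conditional negativity (and implicitly (\ref{psi2})) enters only through the interpolation step.
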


\begin{proof} An elementary calculation shows that
\begin{eqnarray*}
 \int_0^\infty|\partial_s T_sx|^2sds&=&\sum _{k,j}   (c_k\la_{h_k})^*c_j\la_{h_j}\psi(h_i)\psi(h_j)\int_0^\infty e^{-s(\psi(h_k)+\psi(h_j))}sds\\
  &=& \sum _{k,j}  a_{k,j}(c_k\la_{h_k})^*c_j\la_{h_j},
 \end{eqnarray*}
with $$a_{k,j}= \frac {\psi(h_k)\psi(h_j)}{(\psi(h_k)+\psi(h_j))^2}\geq0$$ since $ \int_0^\infty e^{-t}tdt=1$.
So
\begin{eqnarray*}
   \tau(\int_0^\infty|\partial_s T_sx|^2sds)^\frac12 &\leq& (\tau\int_0^\infty|\partial_s T_sx|^2sds)^\frac12\\
  &= &(\sum_{k}|c_k|^  2a_{k,k})^\frac12=\frac12  (\sum_k |c_k|^2)^\frac12.
   \end{eqnarray*}
On the other hand, it is easy to see that 
\begin{eqnarray*}
  \sup_j \sum_k a_{k,j}\leq c_\delta, \ \ \sup_k \sum_j a_{k,j}\leq c_\delta.
\end{eqnarray*}
 Applying Lemma 2 for $p=q=\infty$, we have \begin{eqnarray*}
   \|\int_0^\infty|\partial_s T_sx|^2sds\|
  &\leq &\|(\sum _{k,j} |c_k|^2a_{k,j})^\frac12\| \|(\sum_{k,j}|c_j|^  2a_{k,j})^\frac12\|\\
  &\leq& c_\delta  \|\sum_k |c_k|^2\|.
 \end{eqnarray*}

 \end{proof}
  
 \begin{theorem}\label{main1}
 Assume $(h_k)$ is a $\psi$-lacunary sequence. Then, for any $x=\sum_kc_k\la _  {h_k} $, we have
 \begin{eqnarray}
 \|x\|_{BMO_c(\psi)}^2&\simeq^{ c_\delta}& \|\sum_k |c_k|^2 \|.\label{BMOce}\\
 \|x\|_{BMO(\psi)}^2&\simeq^{ c_\delta}&\max\{\|\sum_k |c_k|^2 \|,\|\sum_k |c_k^*|^2 \|\}.\\
 \ \ tr (\sum_k |c_k|^2 )^\frac12&\simeq^{ c_\delta}&\inf\{\tau\otimes tr( \int_0^\infty|\partial_s T_sx|^2s)^\frac12 ; \tau (x \la^*_{ h_k})=c_k\}.\label{H1e}
 \end{eqnarray}
 \end{theorem}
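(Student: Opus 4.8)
The plan is to prove the three equivalences and then deduce Theorem \ref{theorem1}, everything resting on the column estimate \eqref{BMOce}: the full $BMO$ equivalence will follow by applying \eqref{BMOce} to $x$ and to $x^{*}$, and \eqref{H1e} by combining \eqref{BMOce} with Lemma \ref{Lemma} and the $H^{1}_{c}$--$BMO_{c}$ duality. In \eqref{BMOce} the lower bound $\|\sum_{k}|c_{k}|^{2}\|\le\|x\|_{BMO_{c}(\psi)}^{2}$ is soft and needs no lacunarity: since $\tau(\la_{g})=0$ unless $g=e$ and $h_{k}^{-1}h_{j}=e$ only for $k=j$, applying the contractive conditional expectation $\mathcal E=\mathrm{id}_{B(H)}\otimes\tau:B(H)\bar\otimes\mathcal G\to B(H)$ to $T_{s}|x-T_{s}x|^{2}$ annihilates the off--diagonal part and leaves $\mathcal E(T_{s}|x-T_{s}x|^{2})=\sum_{k}(1-e^{-s\psi(h_{k})})^{2}|c_{k}|^{2}$, so $\|\sum_{k}(1-e^{-s\psi(h_{k})})^{2}|c_{k}|^{2}\|\le\|x\|_{BMO_{c}(\psi)}^{2}$ for every $s$; letting $s\to\infty$ and using $\|\sup_{s}a_{s}\|=\sup_{s}\|a_{s}\|$ for increasing nets of positives gives the claim.

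For the upper bound one first expands, using $x-T_{s}x=\sum_{k}(1-e^{-s\psi(h_{k})})c_{k}\la_{h_{k}}$ and the fact that the $c_{k}$ commute with the $\la_{h}$:
\[
T_{s}|x-T_{s}x|^{2}=\sum_{k,j}\beta_{k,j}(s)\,c_{k}^{*}c_{j}\,\la_{h_{k}^{-1}h_{j}},\qquad \beta_{k,j}(s):=(1-e^{-s\psi(h_{k})})(1-e^{-s\psi(h_{j})})\,e^{-s\psi(h_{k}^{-1}h_{j})}\ge 0 .
\]
Now apply Lemma \ref{CS} with its summation index taken to be the pair $(k,j)$, $a_{(k,j)}=\beta_{k,j}(s)$, and the vectors $c_{k}\la_{h_{k}}$, $c_{j}\la_{h_{j}}$ (so that $(c_{k}\la_{h_{k}})^{*}(c_{j}\la_{h_{j}})=c_{k}^{*}c_{j}\la_{h_{k}^{-1}h_{j}}$ and $|c_{k}\la_{h_{k}}|^{2}=|c_{k}|^{2}$), together with the symmetry $\beta_{k,j}(s)=\beta_{j,k}(s)$, to get
\[
\|T_{s}|x-T_{s}x|^{2}\|\le\Big(\sup_{k}\sum_{j}\beta_{k,j}(s)\Big)\,\Big\|\sum_{k}|c_{k}|^{2}\Big\| .
\]
Thus the theorem reduces to the Schur--type estimate $\sup_{s>0}\sup_{k}\sum_{j}\beta_{k,j}(s)\le c_{\delta}$.

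This estimate is the main obstacle, the delicate point being uniformity as $s\to 0$. Fix $k,s$ and set $u=s\psi(h_{k})$. For $j>k$, \eqref{psi1} gives $\psi(h_{j})\ge(1+\delta)^{j-k}\psi(h_{k})$ and \eqref{psi2} gives $\psi(h_{k}^{-1}h_{j})\ge\delta\psi(h_{j})$, so $\beta_{k,j}(s)\le\min(1,u)\,e^{-\delta u(1+\delta)^{j-k}}$; the sum over $j>k$ is $\min(1,u)\sum_{m\ge1}e^{-\delta u(1+\delta)^{m}}$, bounded by a constant depending only on $\delta$ (for $u\ge1$ the series alone converges; for $u<1$ one splits it at $m\approx\log_{1+\delta}(1/u)$ and uses $u\log(1/u)\to0$). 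For $j<k$, \eqref{psi1} gives $\psi(h_{j})\le(1+\delta)^{-(k-j)}\psi(h_{k})$ and \eqref{psi2} gives $\psi(h_{k}^{-1}h_{j})\ge\delta\psi(h_{k})$, so $\beta_{k,j}(s)\le(1+\delta)^{-(k-j)}u\,e^{-\delta u}$, whose sum over $j<k$ is $\le\tfrac1\delta ue^{-\delta u}\le\tfrac1{e\delta^{2}}$; the term $j=k$ is $\le1$. Crucially the factors $1-e^{-s\psi(h_{k})}$ are exactly what force these sums to stay bounded as $s\to0$: without them the row and column sums of $e^{-s\psi(h_{k}^{-1}h_{j})}$ diverge. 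This proves \eqref{BMOce}. Running the identical computation on $x^{*}=\sum_{k}c_{k}^{*}\la_{h_{k}^{-1}}$ produces $T_{s}|x^{*}-T_{s}x^{*}|^{2}=\sum_{k,j}(1-e^{-s\psi(h_{k})})(1-e^{-s\psi(h_{j})})e^{-s\psi(h_{k}h_{j}^{-1})}c_{k}c_{j}^{*}\la_{h_{k}h_{j}^{-1}}$, and since $\psi(h_{k}h_{j}^{-1})$ satisfies the analogous lower bound $\ge\delta'\max\{\psi(h_{k}),\psi(h_{j})\}$ (immediate when $\psi$ is subadditive, cf. the remark after \eqref{psi2}) the same Schur estimate gives $\|x^{*}\|_{BMO_{c}(\psi)}^{2}\simeq^{c_{\delta}}\|\sum_{k}|c_{k}^{*}|^{2}\|$; with the definition of $\|\cdot\|_{BMO(\psi)}$ this is the second equivalence.

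For \eqref{H1e} the inequality $\le$ is immediate from Lemma \ref{Lemma}: $x=\sum_{k}c_{k}\la_{h_{k}}$ satisfies $\tau(x\la_{h_{k}}^{*})=c_{k}$, and applying $tr$ to the operator inequality \eqref{H1<} gives $\tau\otimes tr(\int_{0}^{\infty}|\partial_{s}T_{s}x|^{2}s\,ds)^{1/2}\le\tfrac12\,tr(\sum_{k}|c_{k}|^{2})^{1/2}$. For $\ge$, use that $BMO_{c}(\psi)$ is the dual of $H^{1}_{c}(\psi)$ under the pairing $(x,y)\mapsto\tau\otimes tr(xy^{*})$ (\cite{M08,JM12}): the infimum in \eqref{H1e} is the quotient norm of $H^{1}_{c}(\psi)/N$ with $N=\{x:\hat x(h_{k})=0\ \forall k\}$, whose dual is $N^{\perp}=\overline{\mathrm{span}}\{b\la_{h_{k}}:b\in B(H),\,k\in\bbN\}\subset BMO_{c}(\psi)$, so it equals $\sup\{|\sum_{k}tr(c_{k}b_{k}^{*})| : \|\sum_{k}b_{k}\la_{h_{k}}\|_{BMO_{c}(\psi)}\le1\}$; by \eqref{BMOce} the constraint is, up to the factor $c_{\delta}^{1/2}$, the constraint $\|(b_{k})\|_{L^{\infty}(B(H),\ell^{2}_{c})}=\|(\sum_{k}|b_{k}|^{2})^{1/2}\|\le1$, and the duality of $L^{\infty}(B(H),\ell^{2}_{c})$ with $L^{1}(B(H),\ell^{2}_{c})$ (tested on $b_{k}=c_{k}(\sum_{j}|c_{j}|^{2})^{-1/2}$) identifies the resulting supremum with $tr(\sum_{k}|c_{k}|^{2})^{1/2}$. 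This proves \eqref{H1e}, and combining \eqref{BMOce}, the second equivalence and \eqref{H1e} yields Theorem \ref{theorem1}.
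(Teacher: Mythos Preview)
Your argument is correct and, for the $BMO_c$ upper bound, essentially identical to the paper's: both expand $T_s|x-T_sx|^2$, apply Lemma~\ref{CS}, and reduce to a Schur bound on the kernel $\beta_{k,j}(s)=(1-e^{-s\psi(h_k)})(1-e^{-s\psi(h_j)})e^{-s\psi(h_k^{-1}h_j)}$. Your splitting by $j>k$, $j<k$, $j=k$ differs only in bookkeeping from the paper's split according to $t\psi(h_k)\le 1$ or $>1$; the underlying estimate is the same. The lower bound via the conditional expectation and $s\to\infty$, and the passage to full $BMO$ ``by taking adjoints'', match the paper exactly (and the paper is no more explicit than you are about why $\psi(h_kh_j^{-1})$ inherits the lacunary lower bound).

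The one place your route genuinely differs is \eqref{H1e}. You invoke the abstract $H^1_c(\psi)$--$BMO_c(\psi)$ duality from \cite{M08,JM12} to identify the infimum as a quotient norm, and then feed in \eqref{BMOce}. The paper instead argues directly, without citing any duality theorem: given $b_k$ with $\|\sum_k|b_k|^2\|=1$ realizing $tr(\sum_k|c_k|^2)^{1/2}=tr\sum_kc_k^*b_k$, it sets $y=\sum_kb_k\la_{h_k}$ and uses the reproducing identity
\[
(\tau\otimes tr)(x^*y)=4\,\tau\otimes tr\int_0^\infty(\partial_sT_sx)^*(\partial_sT_sy)\,s\,ds
\]
together with operator Cauchy--Schwarz and the \emph{second} estimate of Lemma~\ref{Lemma} (the $L^\infty$ square-function bound $\|\int_0^\infty|\partial_sT_sy|^2s\,ds\|\le c_\delta\|\sum_k|b_k|^2\|$) to conclude $tr(\sum_k|c_k|^2)^{1/2}\le 4c_\delta^{1/2}\,\tau\otimes tr(\int_0^\infty|\partial_sT_sx|^2s\,ds)^{1/2}$. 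Your approach is conceptually tidier; the paper's is more self-contained, needing only Lemma~\ref{Lemma} rather than a Fefferman--Stein type duality in the semigroup setting (which carries its own hypotheses).
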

\begin{proof}We prove the BMO estimate first. An easy calculation shows that
\begin{eqnarray*}
 T_t|x -T_tx|^2
  = \sum _{k,j}  a_{k,j}(c_k\la_{h_k})^*c_j\la_{h_j},
 \end{eqnarray*}
with $$a_{k,j}= e^{-t\psi(h^{-1}_kh_j )}(1-e^{-t\psi(h_k^{-1})})(1-e^{-t\psi(h_j)})\geq0.$$
By the lacunary property $\psi(h_k^{-1}h_j)\geq |\psi(h_k)-\psi(h_j)|$, we have
\begin{eqnarray*}
  \sum _{k}  a_{k,j}
  &\leq &\sum _{t\psi(h_k)\leq 1} (1-e^{-t\psi(h_k^{-1})})    +\sum _{t\psi(h_k)> 1}   e^{-t\psi(h^{-1}_kh_j )} \\ 
  &\leq &\sum _{t\psi(h_k)\leq 1}  t  \psi(h_k)   +\sum _{t\psi(h_k)> 1}    e^{- t \delta\psi(h_k)}  \\
  &\leq & 1+\delta^{-1}+\frac1{1-e^{-  {\delta^2} }}\\
   &\leq& c _\delta.  
 \end{eqnarray*}
We then get $\sup_j \sum_k a_{k,j}\leq c_\delta$. Similarly, $\sup_k \sum_j a_{k,j}\leq c_\delta.$  By Lemma \ref{CS}, we have
\begin{eqnarray*}
 \|T_t|x-T_tx|^2\|
  &\leq &\|\sum _{k,j} |c_k|^2a_{k,j}\|^\frac12 \|\sum_{k,j}|c_j|^  2a_{k,j}\|^\frac12\\
    &\leq&c_\delta  \|\sum_k |c_k|^2 \|.
 \end{eqnarray*}
 Taking supremum on $t$, we get $\|x\|^2_{BMO_c }\leq c_\delta \|\sum_k |c_k|^2 \|.$ The lower estimate is obvious by taking conditional expectation $\tau$ and sending $t$ to $\infty$. Taking the adjoint, we prove the  estimate for the BMO norms. 
 
 We now turn to the $H^1$-estimate (\ref{H1e}). By duality, we may choose $b_k$ such that $\|\sum |b_k|^2\|=1$ and
\begin{eqnarray*}tr (\sum_k |c_k|^2 )^\frac12= tr\sum c^*_kb_k=(\tau\otimes tr) x^*y,
  \end{eqnarray*} 
with $y=\sum b_k\la_{h_k}$ and any (finite) Fourier sum $x$ such that $\tau (x\la^*_{h_k})=c_k$.
We then have
\begin{eqnarray*}
(\tau\otimes tr)x^*y&=&4\tau\otimes tr \int_0^\infty \partial_s T_sx^*\partial_s T_sy sds\\
 &\leq& 4\tau\otimes tr (\int_0^\infty|\partial_s T_sx|^2sds)^\frac12\|\int_0^\infty|\partial_s T_sy|^2sds\|^\frac12
\end{eqnarray*}
 Combining the above estimates with Lemma \ref{Lemma}, we obtain \begin{eqnarray*}
 tr (\sum_k |c_k|^2 )^\frac12\leq 4c_\delta \tau\otimes tr(\int_0^\infty|\partial_s T_sx|^2sds)^\frac12.
    \end{eqnarray*}
    The other direction follows by taking $tr$ on both sides of (\ref{H1<}).

 \end{proof}

Given a length-lacunary sequence $h_k\in G$,  define the linear map $T$ from $L^\infty(\ell_2)$ to $BMO$ by 
 $$T((c_k))=\sum_k c_k\la_{h_k}.$$
 Then $T$ has a norm $c_\delta$ from $L^\infty(\ell^2_{cr})$ to $BMO$ and norm 1 from  $L^2(\ell^2)$ to $L^2(\hat G).$
By the interpolation result Lemma \ref{JM12}, we get 
\begin{corollary} \label{cor1}
 Assume $(h_k)$ is a $\psi$-lacunary sequence for some conditionally negative $\psi$. We have that, for any $p>2, x=\sum_kc_k\la _  {h_k} $, 
\begin{eqnarray}
 \|x\|_{L^p}\leq c^{\frac {p-2}{2p}}_\delta p\max\{\|(\sum_k |c_k|^2)^\frac12 \|_{L^ p},\|(\sum_k |c_k^*|^2)^\frac12 \|_{L^ p}\}. \label{psqrtp}
 \end{eqnarray}
 By duality, we get, for any $1<p<2$,
 \begin{eqnarray}
  \|(c_k)_k\|_{S^p(\ell_{cr}^2)}  \lesssim \inf\{\|x\|_p; \hat x(h_k)=c_k\}.\label{10}
 \end{eqnarray}
 \end{corollary}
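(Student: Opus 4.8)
The plan is to flesh out the two-sentence argument indicated just above the statement: interpolate the Fourier multiplier $T((c_k))=\sum_k c_k\la_{h_k}$ between $BMO(\psi)$ and $L^2(\hat G)$, and then pass to the range $1<p<2$ by duality.

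First I would record the two endpoint estimates. The $BMO$ line of Theorem~\ref{main1} says $\|T((c_k))\|_{BMO(\psi)}^2\le c_\delta\max\{\|\sum_k|c_k|^2\|,\|\sum_k|c_k^*|^2\|\}$, which is exactly $c_\delta\|(c_k)\|_{L^\infty(\mathcal M,\ell^2_{cr})}^2$ by the definition of the $\ell^2_{cr}$-norm at $p=\infty$; hence $T\colon L^\infty(\mathcal M,\ell^2_{cr})\to BMO(\psi)$ has norm at most $c_\delta^{1/2}$. For the other endpoint, since the $h_k$ are pairwise distinct the $\la_{h_k}$ are orthonormal in $L^2(\hat G)$, so $\|T((c_k))\|_{L^2(\hat G)}^2=tr\sum_k|c_k|^2$; as the column, row and $\ell^2$ norms all coincide at exponent $2$, this says that $T$ is isometric from $L^2(\mathcal M,\ell^2_{cr})=L^2(\mathcal M,\ell^2)$ into $L^2(\hat G)$.

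Next, for $2<p<\infty$ I would complex-interpolate $T$ at $\theta=2/p$. On the domain side the usual interpolation identity for column/row spaces, $[L^\infty(\mathcal M,\ell^2_{cr}),L^2(\mathcal M,\ell^2_{cr})]_\theta=L^p(\mathcal M,\ell^2_{cr})$ (\cite{JMX06}), holds with a universal constant. On the target side, Lemma~\ref{JM12} gives $[BMO(\psi),L^1(\hat G)]_{1/2}=L^2(\hat G)$, so reiteration yields $[BMO(\psi),L^2(\hat G)]_\theta=[BMO(\psi),L^1(\hat G)]_{1/p}=L^p(\hat G)$, and by \cite{JM12} the equivalence constant here is $O(p)$. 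Interpolating the bounded operator $T$ between the two endpoints then gives $T\colon L^p(\mathcal M,\ell^2_{cr})\to L^p(\hat G)$ with norm $\lesssim p\,(c_\delta^{1/2})^{1-\theta}=p\,c_\delta^{(p-2)/(2p)}$, which is (\ref{psqrtp}) once $\|(c_k)\|_{L^p(\mathcal M,\ell^2_{cr})}$ is written as the maximum of the two square-function norms. For (\ref{10}) with $1<p<2$, I would apply this to the conjugate exponent $p'$ and take adjoints: checking against the trace pairing $tr\otimes\tau$, the adjoint of $T\colon L^{p'}(\mathcal M,\ell^2_{cr})\to L^{p'}(\hat G)$ is the Fourier restriction $x\mapsto(\hat x(h_k))_k$, and since $(L^{p'}(\mathcal M,\ell^2_{cr}))^*=L^p(\mathcal M,\ell^2_{cr})$ this gives $\|(\hat x(h_k))_k\|_{S^p(\ell^2_{cr})}\lesssim_\delta\|x\|_p$; infimizing over all $x$ with $\hat x(h_k)=c_k$ finishes.

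The analytic heart of the matter is already contained in Theorem~\ref{main1}, so the only genuine work is the bookkeeping of constants. The step to watch is the $p$-dependence: one needs the column/row interpolation above to contribute nothing $p$- or $\delta$-dependent (so the exponent of $c_\delta$ comes out as $(p-2)/(2p)$ and not worse), and one needs the quantitative input from \cite{JM12} that the constant in $[BMO(\psi),L^1(\hat G)]_{1/p}=L^p(\hat G)$ grows at most linearly in $p$. If one is content with an unspecified function of $p$ in place of the explicit factor $p$, this last point may be treated as a black box.
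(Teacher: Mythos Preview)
Your proposal is correct and follows exactly the route the paper indicates in the paragraph preceding the corollary: bound $T$ from $L^\infty(\mathcal M,\ell^2_{cr})$ to $BMO(\psi)$ via Theorem~\ref{main1}, observe the trivial $L^2$ isometry, and interpolate using Lemma~\ref{JM12}, then dualize for $1<p<2$. You have simply fleshed out the constants and the reiteration/duality details that the paper leaves implicit; in particular your identification of the linear-in-$p$ constant from \cite{JM12} as the source of the factor $p$ is the right reading.
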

 We will prove a column version of (\ref{10}) in the next section.
 \begin{remark} Corollary \ref{cor1} can also be obtained by combining the noncommutative $H^\infty$-calculus techniques developed in \cite{JMX06} (page 118) and the dilation theory proved in \cite{R08}. There is another approach via noncommutative Riesz transforms (\cite{JMPX}). The order of the equivalence  constants   in (\ref{psqrtp})  is better than what implied by these two approaches. It is unclear whether one can expect an optimal order $\sqrt p$ like the case of the integer group.
  \end{remark}
 
   \begin{remark}   If,  $G=\F_n,\psi $ is the reduced word length, it is also easy to verify that $\psi$-lacunary set is $B(2)$ in the sense of W. Rudin, so it is a $\Lambda(4)$ set by Harcharras's work\cite{Ha99}. This does not seem clear  for $B(p)$ with $p>2$.   
 \end{remark}
 \begin{remark}  The sequence of free generators $\{g_i,i\in {\Bbb N}\}$ of $\F_\infty$ is a $\psi$-lacunary sequence for some $\psi$. Indeed, let $\pi$ be the group homomorphism on $\F_\infty$ sending $g_i$ to $g_{i}^{2^i}$. Then $\psi(h)=|\pi(h)|$ is a conditionally negative function.
 \end{remark}

   \begin{remark}  One can extend (\ref{10})  to the range $0<p\leq1$ as a Khintchine-type inequality
    \begin{eqnarray}
 \|(c_k)_k\|_{S^p(\ell_{cr}^2)} \lesssim \|\sum_k c_k\la_{h_k} \|_{\frac p2}^2.\label{Khin}
 \end{eqnarray}
   applying Pisier-Ricard's theorem \cite{PR17}. For the $p=1$ case, one may follow Haagerup-Musat's argument in \cite{HM07} to get a better constant in (\ref{Khin}).
\end {remark}

  \begin{remark}  The conditionally negativity assumption of $\psi$ is needed merely by the interpolation result Lemma \ref{JM12}. The arguments for other results of this section need the assumptions (\ref{psi1}), (\ref{psi2}) only.
\end {remark}

\begin{remark}\label{finalremark} Let $P_t$  be the Poisson semigroup for bounded functions on the torus ${\Bbb T}$. As we pointed out before, the semigroup BMO associated with $P_t$  coincides with the classical BMO. Let $S_t=P_t\otimes id_{M_n}$ be its extension to the von Neumann algebra of  bounded  $n$ by $n$ matrix-valued functions  on ${\Bbb T}^d$.  The semigroup  BMO$_c$ associated with $S_t$  coincides with the matrix-valued BMO$_{so}$ introduced in the literature (e.g. \cite{NPTV02},\cite{Ps00}). Note for the dyadic BMO$_{so}$-norm, easy examples by Rademacher functions show  that $\|x\|_{BMO^d_{so}}\simeq ^{c_n }\|x^*\|_{BMO^d_{so}}$ with $c_n\simeq \sqrt n$ being optimal. There has not been an easy way to show that $\sqrt n$ is also optimal for the usual (non-dyadic) BMO$_{so}$ norm.\footnote{The BMO by dyadic BMO trick does not help on producing a concrete example.}   Theorem \ref{main1} provides such a way by taking $f=\sum_{0<k\leq n}  e_{1,k}z^{2^k}$.
   \end{remark}

	
\section{Proof of Theorem \ref{theorem2}}
	

Through out this section, we assume  $(G, \leq)$ is a countable (non-abelian) discrete group with a bi-invariant total order. 
This is equivalent to say that $G$ contains a normal subsemigroup  $G_+$
 such that, for $G_-=(G_+)^{-1}$,
$$G_+ \cup G_{-}=G, G_{+} \cap G_{-}=\{e\}.$$  
In this case, one has $G_+=\{g\in G; g\geq e\}$ and $x\leq y$ iff $x^{-1}y\in G_+.$ We use the notation $x<y$ if $x\leq y$ and $x\neq y$.

\subsection{Noncommutative $H^p$- space}


The analytic noncommutative Hardy spaces  make sense in the general context of Arveson's subdiagonal operator algebras (\cite{A98}). Its definition is quite simple in the ordered group case. 

Let $\mathcal{L}(G)$ be the group von Neumann algebra with the canonical trace $\tau_G$.
Let $(\mathcal{M},\tau)$ be
a semifinite von Neumann algebra.
Let $\mathcal{N}=\mathcal{L}(G)\overline{\otimes} \mathcal{M}$ with the trace $\tau_G\otimes \tau$. For $1\leq p\leq\infty$,  let $\mathcal{A}_p\subset L^p({\cal N})$ be the collection of all the finite sum $\sum c_g\otimes\lambda_g$ with $g\in G_+, c_g\in L^p({\cal M}).$
Let  $H^p(\mathcal{N})$ be the norm (resp. weak operator) closure of ${\cal A}_p$ in $L^p({\cal N})$ for $1\leq p<\infty$ (resp. $p=\infty$). The classical factorization, interpolation and duality results extend to the noncommutative Hardy spaces $H^p(\mathcal{N})$ (see \cite{PX03}, section 8). We will  need the following factorization theorem (Theorem 4.3 of  \cite{MW98a}, Theorem 3.2 of \cite{BO20}), 
\begin{lemma}\label{MW98}  
Given any $x\in H^1(\mathcal{N})$ and $\varepsilon>0$,
 there exist
$y, z\in H^2(\mathcal{N})$
such that $x=yz$ and $\|y\|_2\|z\|_2\leq\|x\|_1+\varepsilon$. 
\end{lemma}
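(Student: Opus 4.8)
The plan is to reduce the noncommutative statement to a density argument against the well-understood $p=2$ case, using the Riesz-type factorization available in subdiagonal algebras. First I would recall that $\mathcal{N}=\mathcal{L}(G)\overline\otimes\mathcal{M}$ together with the conditional expectation $\Phi$ onto $\mathcal{L}(\{e\})\overline\otimes\mathcal{M}$ (the ``constant term'') is a finite, maximal subdiagonal algebra in Arveson's sense, with $H^\infty(\mathcal{N})$ the $w^*$-closure of $\mathcal{A}_\infty$: the bi-invariant total order on $G$ makes $G_+$ a subsemigroup with $G_+\cap G_-=\{e\}$, which is exactly the hypothesis needed to check the Arveson axioms (density, $H^\infty\cap(H^\infty)^*=\Phi(\mathcal N)$, multiplicativity of $\Phi$ on $H^\infty$). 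Having placed ourselves in that framework, the statement is precisely the strong Szeg\H{o}/Riesz factorization for $H^1$ of a finite subdiagonal algebra, proved by Marsalli--West and reproved by Bekjan--Ospanov; so strictly speaking one only needs to \emph{cite} it, but let me sketch the mechanism.

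The key steps, in order. Step 1: reduce to positive $x$. Given $x\in H^1(\mathcal N)$, write its polar decomposition $x=u|x|$ with $u$ a partial isometry in $\mathcal N$; it suffices to factor $|x|\in L^1(\mathcal N)_+$ as $|x|=a^*a$ with $a\in H^2$ and $\|a\|_2^2\le \| |x| \|_1+\varepsilon$, since then one must still absorb $u$ — this is the subtle point, addressed in Step 3. Step 2: the outer factorization. For $h=|x|\in L^1(\mathcal N)_+$, one produces $a\in H^2(\mathcal N)$ with $a^*a = h$ by the standard device: approximate $\log$ of $h+\delta 1$, solve for an ``outer'' function via the analytic functional calculus / the fact that $H^2$ is invariant under $H^\infty$-multiplication and $\Phi(\log a^*a)=\log\Phi(\cdots)$ in the Fuglede--Kadison determinant sense, exactly as in the commutative disc-algebra proof of the F. and M. Riesz factorization. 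Concretely, $\delta\to 0$ and a weak-compactness argument in $L^2$ yield $a$ with $a^*a\le h$ and $\|a\|_2^2=\tau(h)$, whence in fact $a^*a=h$. Step 3: restore the unitary. One has $x=u\,a^*a$; set $y=u a^*\cdot$? — no: the honest route is to factor $h=|x|$ twice, $h^{1/2}=h^{1/4}h^{1/4}$, and use that $h^{1/2}\in L^2(\mathcal N)$ admits an inner-outer decomposition $h^{1/2}=w b$ with $b\in H^2$ outer and $w$ a unitary in $\mathcal N$; then $x=u h^{1/2}h^{1/2}=(u w b)(w^* \cdot$?) requires $w^* b^*$?... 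The clean statement that makes this work is: every $k\in L^2(\mathcal N)$ factors as $k=(\text{unitary})\cdot(\text{outer }H^2)$, and every $x\in L^1(\mathcal N)$ as $x=v\,k$ with $v$ unitary, $k\in L^1_+$; combining, $x = v\, b^* b$ with $b\in H^2$, and then $y:=v b^*$ need not lie in $H^2$, so instead one writes $x = (v k_1)(k_2)$ where $k_1,k_2\in H^2$ come from a \emph{two-sided} outer factorization of $k$ — this is precisely Theorem 4.3 of \cite{MW98a}. Step 4: the norm bound $\|y\|_2\|z\|_2\le\|x\|_1+\varepsilon$ falls out because the outer factors satisfy $\|b\|_2^2=\tau(k)=\|x\|_1$ up to the $\delta$-regularization error, which one makes smaller than $\varepsilon$ by the Cauchy--Schwarz inequality $\|yz\|_1\le\|y\|_2\|z\|_2$ giving the reverse inequality automatically.

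The main obstacle is Step 3: in the noncommutative setting the unitary part $u$ of the polar decomposition does not commute with the outer function, so one cannot simply ``pull it through'' as in the scalar case, and the naive guesses $y=ua^*$, $z=a$ fail because $ua^*\notin H^2$ in general. The resolution — and the heart of the Marsalli--West argument we are invoking — is to avoid polar-decomposing $x$ at all and instead run the outer-factorization machinery directly on $x$ using the left/right versions of Arveson's factorization together with the fact that $\Phi$ is multiplicative on $H^\infty$; the Fuglede--Kadison determinant identity $\Delta(x)=\Delta(\Phi(x))$ for $x\in H^1$ with $\Phi(x)$ invertible is what guarantees the factors land in $H^2$ rather than a larger space. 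Since this is all contained in the cited references, in the paper I would state Lemma \ref{MW98} as quoted and point to \cite{MW98a, BO20}, indicating only that the bi-invariantly ordered group case fits the subdiagonal framework.
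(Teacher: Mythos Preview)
The paper does not prove this lemma at all; it is stated as a citation (``Theorem 4.3 of \cite{MW98a}, Theorem 3.2 of \cite{BO20}'') and used as a black box. Your bottom-line recommendation --- state the lemma and point to \cite{MW98a, BO20}, noting that the bi-invariantly ordered group setting fits Arveson's subdiagonal framework --- is therefore exactly what the paper does, and is the correct call.

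As for the sketch you offer above that bottom line: it is not a proof, and you are candid about this (Step~3 is a string of false starts that terminates in ``this is precisely Theorem 4.3 of \cite{MW98a}''). Two specific problems worth flagging. First, Step~1 is already off: for $x\in H^1(\mathcal N)$ the polar modulus $|x|$ need not lie in $H^1(\mathcal N)$, so ``reduce to positive $x$'' is not a legitimate reduction inside the Hardy space --- the whole difficulty of noncommutative Riesz factorization is that one cannot separate the analytic structure from the polar decomposition in the naive way. Second, you describe the ambient algebra as a \emph{finite} subdiagonal algebra, but the lemma is stated with $\mathcal M$ merely semifinite; this is precisely why the paper cites \cite{BO20} alongside \cite{MW98a}, and why in the proof of Theorem~\ref{theorem4.2} the authors explicitly reduce to the finite case by approximation before invoking the lemma. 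None of this affects your conclusion, since you end by deferring to the references.
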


For each $g\in G_+$. Put
$L_g=\{h: g\leq h\leq g^2\}$.
For $E\subset G_+$, let $N(E, g)$ be the number of elements of $E\cap L_g$, i.e.,
$N(E, g)=\#(L_g\cap E)$. Following Rudin's terminology (\cite{Ru62}), we say $E\subset G_+$ is lacunary, if  there is a constant $K$ such that
$$N(E)=\sup_{g\in G_+} N(E,g)\leq K.$$ 
For a general subset $E\subset G$, let $E_+=E\cap G_+, E_-=E-E_+$. We say $E$ is lacunary if $N(E)=N(E_+)+N((E_-)^{-1})$ is finite.

\begin{theorem}\label{theorem4.2}
Assume  that $E$ is a lacunary subset of $G_+$. Then,
for any sequence $c_k\in L^1({\cal M})$, and
any sequence $\{g_k\}_{k=1}^\infty\in E$, we have
 \begin{eqnarray}
 \ \ \ \ \ \  \  \|(c_k)\|_{L^1({\cal M},\ell^2_{cr})}\leq 2K^\frac12\inf \{\tau_G\otimes\tau |x|; \hat x(g_k)=c_k, \supp\hat x\subset G_+\},
 \end{eqnarray}
 for any   $x\in L^1({\cal N})$.
\end{theorem}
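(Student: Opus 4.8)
The plan is to reduce the statement to the ``column'' version of Paley's inequality by exploiting the factorization Lemma \ref{MW98}, and then to run an argument in the spirit of Lust-Piquard--Pisier adapted to Rudin's lacunary sets. Fix $x \in L^1(\mathcal{N})$ with $\supp \hat x \subset G_+$ and $\hat x(g_k) = c_k$; we may assume $\tau_G \otimes \tau |x|$ is finite and close to the infimum. By Lemma \ref{MW98}, write $x = yz$ with $y,z \in H^2(\mathcal{N})$ and $\|y\|_2 \|z\|_2 \leq \tau_G\otimes\tau|x| + \varepsilon$; after rescaling we may take $\|y\|_2 = \|z\|_2 \leq (\tau_G\otimes\tau|x|+\varepsilon)^{1/2}$. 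The Fourier coefficient $c_k = \hat x(g_k)$ is then a ``convolution'' of the coefficients of $y$ and $z$ over the support in $G_+$: writing $y = \sum_{g \in G_+} a_g \otimes \lambda_g$, $z = \sum_{g \in G_+} b_g \otimes \lambda_g$, one has $c_k = \sum_{gh = g_k,\ g,h \in G_+} a_g b_h$. The key structural point is that the order is total and bi-invariant, so the decompositions $g_k = gh$ with $g,h \in G_+$ are parametrized by $g$ running over the order-interval $[e, g_k]$ (equivalently $h \in [e,g_k]$), and this is exactly the kind of ``triangular'' structure that makes the Cauchy--Schwarz/Schur argument work.

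First I would set up the combinatorial heart of the matter: for each $k$ split the sum defining $c_k$ according to whether $g$ ``dominates'' $h$ or vice versa in the order, i.e. whether $g^2 \geq g_k$ or $h^2 \geq g_k$ (one of the two must hold since $gh = g_k$ and the order is total). This produces $c_k = c_k' + c_k''$, where $c_k' = \sum_{g:\ g \leq g_k \leq g^2} a_g b_{g^{-1}g_k}$ collects the terms with the ``large left factor'' and $c_k''$ the rest. In the first sum, the constraint $g \leq g_k \leq g^2$ means precisely $g_k \in L_g$, so for a fixed $g$ there are at most $N(E) \leq K$ values of $k$ with $g_k \in L_g$ contributing this term; this is where Rudin's lacunarity enters and replaces the arithmetic lacunarity used for the Hilbert transform. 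Symmetrically, in $c_k''$ we have $h = g^{-1}g_k$ with $h \leq g_k \leq h^2$, so each $h$ serves at most $K$ indices $k$.

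Next I would estimate $\|(c_k')\|_{L^1(\mathcal{M},\ell^2_c)}$ and $\|(c_k'')\|_{L^1(\mathcal{M},\ell^2_r)}$ separately. For the column norm of $(c_k')$: by Cauchy--Schwarz in the form of Lemma \ref{CS} (with the weights $a_s \equiv 1$, summing over the relevant pairs $(g,h)$), $\sum_k |c_k'|^2 \leq \big(\sum_k \#\{g : g_k \in L_g\}\big)$-type bound giving $\sum_k |c_k'|^2 \lesssim \big(\sum_{g} |a_g|^2 \cdot (\text{number of } k \text{ hitting } g)\big)$ paired against $\sum_h |b_h|^2$, and the bounded multiplicity $\leq K$ turns this into $\|(c_k')\|_{L^1(\mathcal{M},\ell^2_c)} \leq K^{1/2}\|y\|_2\|z\|_2$. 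Taking adjoints (using $|c_k''^*|^2$ and the symmetric multiplicity bound on the $h$-side) gives $\|(c_k'')\|_{L^1(\mathcal{M},\ell^2_r)} \leq K^{1/2}\|y\|_2\|z\|_2$. Combining via the definition of the $\ell^2_{cr}$ norm for $p = 1 < 2$ (which is the sum of a column and a row part) yields
\begin{eqnarray*}
\|(c_k)\|_{L^1(\mathcal{M},\ell^2_{cr})} \leq \|(c_k')\|_{L^1(\mathcal{M},\ell^2_c)} + \|(c_k'')\|_{L^1(\mathcal{M},\ell^2_r)} \leq 2K^{1/2}\|y\|_2\|z\|_2 \leq 2K^{1/2}\big(\tau_G\otimes\tau|x| + \varepsilon\big),
\end{eqnarray*}
and letting $\varepsilon \to 0$ and taking the infimum over admissible $x$ finishes the proof.

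I expect the main obstacle to be making the Cauchy--Schwarz step genuinely operator-valued and rigorous: one must be careful that $c_k' = \sum_g a_g b_{g^{-1}g_k}$ is a sum of \emph{non-commuting} operators, so the passage from $\sum_k |c_k'|^2$ to a product of a column quantity in the $a$'s and a column quantity in the $b$'s has to go through Lemma \ref{CS} applied with the right choice of index set and weights, together with a Schur-test bookkeeping of the multiplicities (the point being that $\sup_g \#\{k: g_k \in L_g\} \leq K$ but a single $k$ may correspond to many $g$'s — the interval $[e,g_k]$ — so one direction of the Schur test needs the ``at most $K$'' bound and the other direction is automatic because each $c_k'$ is a single sum that Cauchy--Schwarz handles pointwise in $k$). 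A secondary technical point is justifying the coefficient-convolution identity $\hat x(g_k) = \sum_{gh=g_k} a_g b_h$ for $x = yz \in H^1$ with $y,z\in H^2$, i.e. that multiplication of $H^2$ elements behaves on Fourier coefficients as expected and that all sums converge in the appropriate sense; this is standard for subdiagonal algebras but should be stated. Once these are in place the argument is a clean adaptation of the classical one.
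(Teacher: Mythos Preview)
Your overall strategy is exactly the paper's: factorize $x=yz$ via Lemma~\ref{MW98}, expand $\hat x(g_k)$ as the ordered convolution $\sum_{e\le g\le g_k}\hat y(g)\hat z(g^{-1}g_k)$, split the sum at the ``midpoint'' of the interval $[e,g_k]$, and use Rudin's lacunarity to control the multiplicities. The combinatorial observations you make (that $g\le g_k\le g^2$ means $g_k\in L_g$, and that bi-invariance lets you run the symmetric count on the $h$-side) are precisely the ones the paper uses.

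There is, however, a genuine gap in the column/row assignment. You put $c_k'$ (large left factor) into $\ell^2_c$ and $c_k''$ into $\ell^2_r$; the paper does the opposite, and in the noncommutative setting this is not a matter of taste. Since $x=yz$ with $y$ on the \emph{left}, the piece $c_k'=\sum_{g:\,g_k\in L_g}\hat y(g)\hat z(g^{-1}g_k)$ lifts to $(\tau_G\otimes 1)\big(y\,Z_k(\lambda_{g_k^{-1}}\otimes 1)\big)$ with $y$ as a genuine left factor. Then Kadison--Schwarz and the unitarity of $\lambda_{g_k^{-1}}$ give
\[
\sum_k c_k'(c_k')^*\ \le\ (\tau_G\otimes 1)\Big(y\Big(\sum_k Z_kZ_k^*\Big)y^*\Big),
\]
which factors cleanly and yields the \emph{row} estimate via H\"older. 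If instead you form $\sum_k (c_k')^*c_k'$ as your column attempt requires, you get $|y|^2$ sandwiched between $Z_k^*$ and $Z_k$, and the sum does not factor; Lemma~\ref{CS} will not save you here. Symmetrically, $c_k''$ lifts with $z$ on the outside \emph{right}, so the column norm is the one that works. In the abelian Lust-Piquard--Pisier setting $yz=zy$ and either assignment succeeds, which is presumably why the swap looked harmless. Once you swap (row for $c_k'$, column for $c_k''$), your ``main obstacle'' dissolves: the operator-valued Cauchy--Schwarz step is exactly this lift-to-$\mathcal N$ plus Jensen/H\"older chain, and your Schur-test bookkeeping then controls $\tau_G\otimes\tau(\sum_k Z_kZ_k^*)$ and its $Y$-analogue by $K\|z\|_2^2$ and $K\|y\|_2^2$ as you outlined.
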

\begin{proof} 
By approximation, without loss of generality, we assume $E$ is a finite set and 
$\mathcal{N}$ is a finite von Neumann algebra.
For $x\in H^1(\mathcal{N})$ and $\varepsilon>0$,
 by Lemma  \ref{MW98},   there exist
$y, z\in H^2(\mathcal{N})$
such that $x=yz$ and $\|y\|_2\|z\|_2\leq\|x\|_1+\varepsilon$. Let us use the notation  $$\hat x(g)=\tau_G(x\la_g^*).$$
Given an element $g_i \in E$ with
$\widehat{x}(g_i)\neq0$, we have 
\begin{equation}\label{convolution of fourier tranform2}
\widehat{x}(g_i)= \sum_{e\leq h\leq g_i}\widehat{y}(h)\widehat{z}(h^{-1}g_i),
\end{equation}
since $\widehat{y}(g)=\widehat{z}(g)=0, ~\forall~g<e$.

Thus the sum in (\ref{convolution of fourier tranform2}) can be split in two parts;
\begin{equation}\label{sum split in two}
\widehat{x}(g_i)=\sum_{e\leq h\leq g_i < h^2}\widehat{y}(h)\widehat{z}(h^{-1}g_i)+
\sum_{e\leq h\leq h^{2}\leq g_i}\widehat{y}(h)\widehat{z}(h^{-1}g_i).
\end{equation}
Let
$$Z_i=\sum_{e\leq h\leq h^{2}< g_i}\lambda_{h} \otimes\widehat{z}(h)
=\sum_{e\leq h\leq  g_i< h^2}\lambda_{h^{-1}g_i}\otimes\widehat{z}(h^{-1}g_i).$$ 
Similarly let $$Y_i=\sum_{e\leq h\leq h^{2}\leq g_i}\lambda_{h}\otimes\widehat{y}(h).$$
It follows that
\begin{eqnarray*}
Z_i(\lambda_{g_i^{-1}}\otimes1)
&=&\sum_{e\leq h\leq g_i < h^2}\lambda_{h^{-1}}\otimes\widehat{z}(h^{-1}g_i)
\end{eqnarray*}
and
\[
(\lambda_{g_i^{-1}}\otimes1)Y_i
=\sum_{e\leq h\leq h^{2}\leq g_i }\lambda_{g_i^{-1}h}\otimes\widehat{y}(h).
\]
Let \begin{eqnarray}
A_i:&=&(\tau_G\otimes 1)(yZ_i(\lambda_{g_i^{-1}}\otimes1))\label{Ai}\\ 
B_i:&=&(\tau_G\otimes 1)((\lambda_{g_i^{-1}}\otimes1)Y_iz).\label{Bi}
\end{eqnarray}
Then 
\begin{eqnarray}
\hat x(g_i)=A_i+B_i,
\end{eqnarray}
because
\begin{eqnarray*}
A_i=&&(\tau_G\otimes 1)(yZ_i(\lambda_{g_i^{-1}}\otimes1))\\
&=&
(\tau_G\otimes 1) ((\sum_{g\geq e}\lambda_g\otimes\widehat{y}(g))
(\sum_{e\leq h\leq g_i< h^2}\lambda_{h^{-1}}\otimes\widehat{z}(h^{-1}g_i)))\\
&=&\tau_G\otimes 1 ((\sum_{g\geq e}\sum_{e\leq h\leq
g_i< h^2}\lambda_{gh^{-1}}\otimes\widehat{y}(g)\widehat{z}(h^{-1}g_i))\\
&=&\sum_{e\leq h\leq g_i< h^2} \widehat{y}(h)\widehat{z}(h^{-1}g_i).
\end{eqnarray*}
and
\begin{eqnarray*}
B_i
&=&\tau_G\otimes 1 ((\lambda_{g_i^{-1}}\otimes1)Y_iz)\\
&=&(\tau_G\otimes 1)((\sum_{e\leq h\leq h^{2}\leq g_i  }\lambda_{g_i^{-1}h}\otimes\widehat{y}(h))
(\sum_{g\geq e}\lambda_g\otimes\widehat{z}(g)))\\
&=&(\tau_G\otimes 1)((\sum_{e\leq h\leq h^{2}\leq g_i}\lambda_{g_i^{-1}h}\otimes\widehat{y}(h))
(\sum_{f\leq g_i}\lambda_{f^{-1}g_i}\otimes\widehat{z}(f^{-1}g_i)))\\
&=&(\tau_G\otimes 1)(\sum_{e\leq h\leq h^{2}\leq g_i }\sum_{f\leq g_i}
\lambda_{g_i^{-1}h}\lambda_{f^{-1}g_i}\otimes\widehat{y}(h)\widehat{z}(f^{-1}g_i))\\
&=&\sum_{e\leq h\leq h^{2}\leq g_i } \widehat{y}(h)\widehat{z}(h^{-1}g_i)
\end{eqnarray*}

Applying the convexity of $\tau_G$ and Jensen's inequality to (\ref{Ai}), we have
\begin{eqnarray*}
\|(A_i)_1^n\|_{L^1(\mathcal{M}, \ell_r^2)}
&\leq&\tau_G\otimes \tau[ (\sum_{i}|(yZ_i\lambda_{g_i^{-1}}\otimes1)^*|^2)^{\frac{1}{2}}]\\
&=&\tau_G\otimes \tau[ (y(\sum_{i}Z_iZ_i )y^*)^{\frac{1}{2}}]\\
&\leq&(\tau_G\otimes \tau(|y|^2))^{\frac{1}{2}}(\tau_G\otimes
\tau\sum_{i}Z_iZ_i^* )^{\frac{1}{2}}\\
&=&(\tau_G\otimes \tau(|y|^2))^{\frac{1}{2}}(\sum_{i}\sum_{e\leq h\leq
g_i< h^2}\|\widehat{z}(h)\|_{L^2(\mathcal{M})}^2)^{\frac{1}{2}}.
\end{eqnarray*}
On the other hand,
we note that $e\leq h\leq g_i<h^2$ implies that
$g_i\in L_{h}$.
Since $N(E, g)\leq K$ we get
\begin{eqnarray*}
\|(A_i)_1^n\|_{L^1(\mathcal{M}, \ell_r^2)}&
\leq&(\tau_G\otimes \tau(|y|^2))^{\frac{1}{2}}(K\sum_{h}\|\widehat{z}(h)\|_{L^2(\mathcal{M})}^2)^{\frac{1}{2}}\\
&= &K^\frac12 \|z\|_{L^2(\mathcal{N})}\|y\|_{L^2(\mathcal{N})}\leq K^\frac12(\|x\|_{L^1(\mathcal{N})}+\varepsilon).
\end{eqnarray*}
 We now take care of $(B_i)_i$. Similarly,
\begin{eqnarray*}
\|(B_i)_1^n\|_{L^1(\mathcal{M}, \ell_c^2)}&\leq& (\tau_G\otimes tr)[z^*(\sum_{i}|\lambda_{g_i^{-1}}\otimes1Y_i|^2)z]^{\frac{1}{2}}\\
&\leq&(\tau_G\otimes \tau(|z|^2))^{\frac{1}{2}}(\tau_G\otimes tr(\sum_{i}|\lambda_{g_i^{-1}}\otimes1Y_i|^2))^{\frac{1}{2}}\\
&=&\|z\|_2(\sum_{i}\tau_G\otimes
\tau(Y^*_iY_i))^{\frac{1}{2}}\\
&=&\|z\|_2(\sum_{i}\sum_{e\leq h\leq  h^{2}\leq g_i}\|\widehat{y}(h)\|_{L^2(\mathcal{M})}^2)^{\frac{1}{2}}.
\end{eqnarray*}
Note the condition $e\leq h\leq   h^2\leq g_i$ implies $$h^{-1}g_i\leq g_i\leq hg_i<g_ih^{-1}g_i$$ and 
$$ h' \leq g_i\leq h'^2$$ with $h'= {h^{-1} g_i}\geq e$ because ``$\leq$" is bi-invariant. We then get
$$\sum_{e\leq h\leq  h^{2}\leq g_i}\|\widehat{y}(h)\|_{L^2(\mathcal{M})}^2\leq \sum_{e\leq h' \leq g_i\leq h'^2}\|\widehat{y}(h')\|_{L^2(\mathcal{M})}^2$$
By the lacunary assumption $N(E)\leq K$, we get  
\begin{eqnarray*}
\|(B_i)_1^n\|_{L^1(\mathcal{M}, \ell_c^2)}
&\leq&\|z\|_2(\sum_{i}\sum_{e\leq h'\leq g_i\leq  h'^{2}}\|\widehat{y}(h')\|_{L^2(\mathcal{M})}^2)^{\frac{1}{2}}\\
&\leq& K^\frac12 \|z\|_{L^2(\mathcal{N})}\|y\|_{L^2(\mathcal{N})}\leq K^\frac12(\|x\|_{L^1(\mathcal{N})}+\varepsilon).
\end{eqnarray*}
Therefore,
\begin{eqnarray*}
\|(\widehat{x}(g_i))_1^n\|_{L^1(\mathcal{M}, \ell_{rc}^2)}
&\leq& \|(B_i)_1^n\|_{L^1(\mathcal{M}, \ell_c^2)}+\|(A_i)_1^n\|_{L^1(\mathcal{M}, \ell_r^2)}\\
&\leq& 
 2K^\frac12(\|x\|_{L^1(\mathcal{N})}+\varepsilon),
\end{eqnarray*}
This completes the proof by letting $\varepsilon\rightarrow 0$.
\end{proof}

For a finite sum $x=\sum_g \hat x(g)\otimes\la_g $ with $\hat x(g)\in L^1({\cal M})$, we set $$x=\sum_{g\geq e} \hat x(g)\otimes\la_g,\ \ x_-=\sum_{g<e} \hat x(g)\otimes\la_g .$$
and define $\|x\|_{ReH_1}=\tau_G\otimes \tau(|x_+|+|x_-|).$
Let Re$H_1({\cal N})$ be the  norm closure of all finite sums $\sum_g \hat x(g)\la_g $. 
\begin{corollary}\label{cor4.3}
Assume  that $E$ is a lacunary subset of $G$. Then,
for any sequence $c_k\in L^1({\cal M})$, and
any sequence $\{g_k\}_{k=1}^\infty\in E$, we have
 \begin{eqnarray}
  \|(c_k)\|_{L^1({\cal M},\ell^2_{cr})}\simeq  \inf \{\|x\|_{ReH_1}; \hat x(g_k)=c_k, \},
 \end{eqnarray}
 for any   $x\in ReH^1({\cal N})$. Moreover, the projection $P$ from $L^2({\cal N})$ onto the subspace expanded by $\la(E)$ extends to a completely bounded map on Re$H^1({\cal N})$.
\end{corollary}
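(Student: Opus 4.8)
The plan is to derive the corollary from Theorem \ref{theorem4.2} together with its adjoint/mirror version, by splitting an arbitrary $x \in \mathrm{Re}H^1(\mathcal N)$ into its analytic and anti-analytic parts. First I would observe that a lacunary set $E \subset G$ decomposes as $E_+ \cup E_-$ with $E_+ = E \cap G_+$ lacunary in $G_+$ and $(E_-)^{-1}$ lacunary in $G_+$, by the very definition $N(E) = N(E_+) + N((E_-)^{-1}) < \infty$. For the lower bound on the infimum: given $x$ with $\hat x(g_k) = c_k$, write $x = x_+ + x_-$. Applying Theorem \ref{theorem4.2} to $x_+ \in H^1(\mathcal N)$ controls $\|(c_k)_{g_k \in E_+}\|_{L^1(\mathcal M, \ell^2_{cr})}$ by $2 N(E_+)^{1/2}\,\tau_G\otimes\tau|x_+|$; applying it to $(x_-)^*$ (whose support lies in $(E_-)^{-1} \subset G_+$ after inversion, using $\widehat{x^*}(g) = \hat x(g^{-1})^*$ and that the roles of row and column spaces swap under the adjoint) controls the $E_-$ part by $2 N((E_-)^{-1})^{1/2}\,\tau_G\otimes\tau|x_-|$. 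Adding the two estimates and using $N(E_\pm) \le N(E) =: K$ gives $\|(c_k)\|_{L^1(\mathcal M,\ell^2_{cr})} \lesssim K^{1/2}\|x\|_{\mathrm{Re}H_1}$, hence the same bound for the infimum.

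For the reverse inequality (the existence of a good $x$ realizing prescribed Fourier coefficients with $\mathrm{Re}H_1$-norm controlled by $\|(c_k)\|_{L^1(\mathcal M,\ell^2_{cr})}$): here I would reverse the roles of the hypotheses and the conclusion in the factorization argument. Given $(c_k) \in L^1(\mathcal M, \ell^2_{cr})$, decompose $c_k = c_k' + c_k''$ with $\|(c_k')\|_{L^1(\mathcal M,\ell^2_c)} + \|(c_k'')\|_{L^1(\mathcal M,\ell^2_r)}$ nearly minimal. For the column part, factor the (diagonal) operator built from $(c_k')$ as a product in $L^2$, analogously to how $x = yz$ was used in Theorem \ref{theorem4.2} but now running the construction backwards: one builds $x_+ = \sum_{g_k \in E_+} c_k'\otimes\lambda_{g_k}$ plus (if needed) auxiliary terms as $yz$ with $\|y\|_2\|z\|_2 \lesssim K^{1/2}\|(c_k')\|_{L^1(\mathcal M,\ell^2_c)}$, using that $E$ is a finite union of $N(E)$-many ``$h \le g \le h^2$'' blocks to pack the $\ell^2$ mass into a single $H^2$-function. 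The row part is handled symmetrically by the adjoint. Summing and taking $\varepsilon \to 0$ in the factorization yields $x$ with $\hat x(g_k) = c_k$ and $\|x\|_{\mathrm{Re}H_1} \lesssim \|(c_k)\|_{L^1(\mathcal M,\ell^2_{cr})}$.

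For the final assertion — that the projection $P$ onto $\overline{\mathrm{span}}\,\lambda(E)$ extends completely boundedly on $\mathrm{Re}H^1(\mathcal N)$ — I would argue that $P$ factors as $L^2(\mathcal N) \to \ell^2$-valued data $\to L^2(\mathcal N)$, and the two displayed equivalences say precisely that, on the relevant subspaces, the $\mathrm{Re}H_1$-norm is comparable to the $L^1(\mathcal M,\ell^2_{cr})$-norm of the coefficient sequence, which is manifestly invariant under passing to the sub-sequence indexed by $E$. Complete boundedness comes for free because all the estimates above are stated for arbitrary coefficients in a von Neumann algebra $\mathcal M$ (indeed in $S^1(H)$), so tensoring with $M_n$ and absorbing it into $\mathcal M$ leaves the constants $2K^{1/2}$ unchanged.

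The main obstacle I anticipate is the reverse inequality: Theorem \ref{theorem4.2} as proved only gives one direction (coefficients controlled by the $H^1$-norm), and running the factorization ``backwards'' to \emph{construct} an analytic Hardy function with prescribed lacunary Fourier coefficients and small norm is genuinely the harder half — it is the noncommutative analogue of the classical fact that Paley sets are \emph{interpolation} sets for $H^1$, and getting the packing of $\ell^2$-mass into a single $H^2$ factor right (so that the product lands in $H^1$ with the correct coefficients and no uncontrolled cross terms) is where the lacunarity bound $N(E) \le K$ must be used a second time, in the opposite direction from the proof of Theorem \ref{theorem4.2}.
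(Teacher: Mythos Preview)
Your argument for the lower bound $\|(c_k)\|_{L^1(\mathcal M,\ell^2_{cr})}\lesssim \|x\|_{ReH_1}$ is correct and is precisely what the paper intends: split $x=x_++x_-$, apply Theorem \ref{theorem4.2} to $x_+\in H^1(\mathcal N)$ and to $(x_-)^*$ (whose Fourier support lies in $(E_-)^{-1}\subset G_+$, with row and column interchanged under the adjoint), and add. Your derivation of the complete boundedness of $P$ from the two-sided equivalence, with constants independent of $\mathcal M$, is also fine.

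The gap is in your treatment of the reverse inequality. You call it ``genuinely the harder half'' and sketch a reverse-factorization scheme that packs $\ell^2$ mass into an $H^2$ factor using the lacunarity bound $N(E)\le K$ a second time. In fact this direction is elementary and uses \emph{no} lacunarity: the naive candidate $x=\sum_k c_k\otimes\lambda_{g_k}$ already works. Given a decomposition $c_k=a_k+b_k$ with $(a_k)\in L^1(\mathcal M,\ell^2_c)$, set $v=(\sum_j a_j^*a_j)^{1/4}$ and $u_k=a_kv^{-1}$; then
\[
\Bigl\|\sum_k a_k\otimes\lambda_{g_k}\Bigr\|_1
=\Bigl\|\Bigl(\sum_k u_k\otimes\lambda_{g_k}\Bigr)(v\otimes 1)\Bigr\|_1
\le \Bigl\|\sum_k u_k\otimes\lambda_{g_k}\Bigr\|_2\|v\|_2
=\|(a_k)\|_{L^1(\ell^2_c)},
\]
since the $\lambda_{g_k}$ are orthonormal in $L^2(\hat G)$ and $\sum_k\|u_k\|_2^2=\tau(\sum_k u_k^*u_k)=\tau(v^2)=\|v\|_2^2$. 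The row piece $(b_k)$ is handled by the adjoint. Splitting over $g_k\in G_+$ and $g_k<e$ gives $\|x\|_{ReH_1}\le 2\|(c_k)\|_{L^1(\mathcal M,\ell^2_{cr})}$. No ``auxiliary terms'' or second use of $N(E)\le K$ are needed; the only place lacunarity enters Corollary \ref{cor4.3} is through Theorem \ref{theorem4.2}. This is why the paper states the corollary without a separate proof.
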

It is proved in \cite{MW98b} and \cite{BO20} that the dual of Re$H_1$ can be identified  as a BMO space and the complex interpolation between this BMO space and Re$H_1({\cal N})$ is $L^p({\cal N})$ for $1<p<\infty$. We then get
\begin{corollary}\label{cor4.4}
For any  sequence $\{g_i\}_{i=1}^n$ in a lacunary subset $E\in G$
\begin{eqnarray}
\|\sum_{i=1}^n\lambda_{g_i}\otimes c_{g_i}\|_p
\approx\|(c_{g_i})_1^n\|_{L^p(\mathcal{M}, \ell_{rc}^2)}, 0<p<\infty.\label{1pinfty}
\end{eqnarray}
Moreover, the projection $P$ from $L^2({\cal N})$ onto the subspace expanded by $\la(E)$ extends to a completely bounded map on $L^p({\cal N})$ for all $1<p<\infty$.
\end{corollary}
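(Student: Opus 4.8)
The plan is to deduce the whole statement from Corollary~\ref{cor4.3} by complex interpolation, treating $1<p<\infty$ first and the endpoints $p\le 1$ separately. For $1\le p<\infty$ let $X_p$ denote the closed linear span of $\{\lambda_g\otimes c:\ g\in E,\ c\in{\cal M}\}$ in $L^p({\cal N})$, let $X_1$ be its closure in $\mathrm{Re}H^1({\cal N})$, and let $X_\infty$ be its closure in the BMO space of \cite{MW98b,BO20}. By Corollary~\ref{cor4.3} the Fourier projection $P$ onto $\lambda(E)$ is completely bounded on $\mathrm{Re}H^1({\cal N})$; since $P$ is self-adjoint for $\tau_G\otimes\tau$ it is then completely bounded on the dual BMO space, and by the interpolation identity $[\mathrm{BMO},\mathrm{Re}H^1({\cal N})]_{1/p}=L^p({\cal N})$ of \cite{MW98b,BO20} it is completely bounded on $L^p({\cal N})$ for every $1<p<\infty$, which is already the ``Moreover'' assertion. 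Since $P$ is thus a common bounded retraction onto the family $(X_p)$, complex interpolation commutes with it: $[X_\infty,X_1]_{1/p}=P\,[\mathrm{BMO},\mathrm{Re}H^1({\cal N})]_{1/p}=PL^p({\cal N})=X_p$.

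It remains to identify the two endpoint spaces. For a finite sum $x=\sum_i\lambda_{g_i}\otimes c_{g_i}$ one has $\|x\|_{\mathrm{Re}H^1}=\|P\tilde x\|_{\mathrm{Re}H^1}\approx\inf\{\|\tilde x\|_{\mathrm{Re}H^1}:\ \widehat{\tilde x}(g_i)=c_{g_i}\ \forall i\}$, by using $P\tilde x=x$ together with the boundedness of $P$; Corollary~\ref{cor4.3} identifies this infimum with $\|(c_{g_i})\|_{L^1({\cal M},\ell^2_{cr})}$ up to constants, so the correspondence $(c_{g_i})\mapsto\sum_i\lambda_{g_i}\otimes c_{g_i}$ is an isomorphism $X_1\cong L^1({\cal M},\ell^2_{cr})$. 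Dualizing, with $\mathrm{Re}H^1({\cal N})^{*}=\mathrm{BMO}$ from \cite{MW98b,BO20} and $(L^1({\cal M},\ell^2_{cr}))^{*}=L^\infty({\cal M},\ell^2_{rc})$, one gets $X_\infty\cong L^\infty({\cal M},\ell^2_{rc})$. Feeding these into the interpolation identity above together with the interpolation formula for the row/column spaces $[L^\infty({\cal M},\ell^2_{rc}),L^1({\cal M},\ell^2_{cr})]_{1/p}=L^p({\cal M},\ell^2_{rc})$, with exactly the Section~1 convention ($\ell^2_r\cap\ell^2_c$ for $p\ge2$ and $\ell^2_c+\ell^2_r$ for $p\le2$; see \cite{PX03,JMX06}), yields $X_p\cong L^p({\cal M},\ell^2_{rc})$, which is (\ref{1pinfty}) for $1<p<\infty$. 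The endpoint $p=1$ is Corollary~\ref{cor4.3} itself: when $\supp\widehat x\subset G_+$ the $\mathrm{Re}H^1$- and $L^1$-norms agree on $X_1$, and the general lacunary case follows by the same splitting $E=E_+\cup E_-$ used there.

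For $0<p<1$ complex interpolation is no longer available, and there I would argue by a Khintchine-type inequality in the spirit of the remark following Corollary~\ref{cor1}: split $E=E_+\cup E_-$, run the $H^2({\cal N})\cdot H^2({\cal N})$ factorization of Lemma~\ref{MW98} as in the proof of Theorem~\ref{theorem4.2}, and invoke the noncommutative Khintchine inequalities for $0<p<1$ of Pisier--Ricard \cite{PR17} to descend below $p=1$. The routine ingredients are the duality of $P$ and the retraction property of complex interpolation; the point that needs care is tracking the row/column bookkeeping through the dualities and interpolations so that one lands on $L^p({\cal M},\ell^2_{rc})$ (intersection when $p\ge2$, sum when $p\le2$) rather than on a mismatched combination of $\ell^2_r$ and $\ell^2_c$. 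The genuinely separate obstacle is the quasi-Banach range $0<p<1$, which lies outside the interpolation scale and forces the appeal to the Pisier--Ricard inequalities.
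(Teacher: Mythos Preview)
Your argument for $1<p<\infty$ and for the boundedness of $P$ is correct and is exactly the route the paper takes: Corollary~\ref{cor4.3} gives $P$ on $\mathrm{Re}H^1$, duality and the interpolation $[\mathrm{BMO},\mathrm{Re}H^1]_{1/p}=L^p$ from \cite{MW98b,BO20} give the rest, and the identification of the endpoints with $L^1({\cal M},\ell^2_{cr})$ and its dual yields~\eqref{1pinfty}. The paper's proof is simply the one-line citation of these same ingredients.

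For $0<p<1$ your conclusion is right but the sketch is slightly misdirected. The $H^2\cdot H^2$ factorization of Lemma~\ref{MW98} is an $H^1$ tool and has no analogue below $p=1$, so ``rerunning Theorem~\ref{theorem4.2}'' is not what happens here. The descent is obtained by feeding the already-established $p\geq 1$ inequality into the Pisier--Ricard machinery; the paper invokes \cite[Corollary~2.2]{PR17} together with Cadilhac's extension \cite[Theorem~2.6]{C19}, the latter being the reference that actually handles the passage to general $0<p<1$ in this setting and which your proposal omits.
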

\begin{proof} The $1<p<\infty$ case follows from the aforementioned duality and interpolation results proved in \cite{MW98b}, \cite{BO20}. The $0<p<1$ case follows from \cite[Corollary 2.2]{PR17} and \cite[Theorem 2.6]{C19}.
\end{proof}
\begin{remark}
 It would be interesting to find whether Pisier-Ricard's argument \cite{PR17} can push Theorem \ref{theorem4.2} to the $p<1$ case.
\end{remark}

\section {The case of free groups}

  Let $G={\Bbb F}_2$ be the nonabelian group with  two free generators $a,b$.  Denotes by $|g|$ the reduced word length of $g\in\F_2$. Every $g \in {\Bbb F}_2$ can be uniquely written as  \begin{eqnarray}\label{g}
g=a^{j_1}b^{k_1}\cdots a^{j_{N}}b^{k_{N}}
 \end{eqnarray} with $j_i,k_i\in {\Bbb Z}$ and $j_i \neq0$ for $1<i\leq N$ and $k_i\neq0$ for $1\leq i< N.$
 Set  the $q$-length of $g$ to be
\begin{eqnarray}\|g\|_q=\sum_i |j_i| ^q+\sum_{i}|k_i|^q.\label{qlength}
\end{eqnarray}  Then $\psi:g\rightarrow \|g\|_q$ is a conditionally negative function for all $0<q\leq 2$. When $q=1$, $\|g\|_q$ is the reduced word length. Its conditional negativity was studied in \cite{Ha79}. All results contained in Section 2, 3 apply to these $\psi$. In particular, all $\|\cdot\|_q$-lacunary sequences are  completely unconditional in $L^p(\hat \F_2)$ for all $0<p<\infty$. This is not clear for $p=\infty$.

  We say a subset $A\in G$ is a completely (unconditional) Sidon set, if $\{\lambda_h, h\in A\}$ is  (completely) unconditional in ${\mathcal L}(\hat G)$, i.e. there exists a constant $C_A$ such that
$$\|\sum_{h_k\in A} \eps_k c_k\lambda_{h_k}\|\leq C_A\|\sum_{h_k\in A}  c_k\lambda_{h_k}\|,$$
for any  choice $\eps_k=\pm$, $c_k\in {\Bbb C}$ (resp.  $K(H)$).
Given a conditionally negative $\psi$ with $ker\psi=\{e\}$, we say a subset $A\in G$ is a (complete)  $\psi$ Paley-set, if there exists a constant $C_A$ such that
\begin{eqnarray}\label{lainfty}
\|\sum_{h_k\in A}  c_k\lambda_{h_k}\|_{BMO(\psi)}\leq C_A\max\{\|\sum_{h_k\in A}  |c_k|^2\|^\frac12,\|\sum_{h_k\in A}  |c_k^*|^2\|^\frac12\},\end{eqnarray}
for any  choice of finite many $c_k\in {\Bbb C}$ (resp.  $K(H)$). These definitions coincide with the classical ``Sidon" and ``Paley" set, when $G={\Bbb Z}$, and $\psi$ is the word length on ${\Bbb Z}$.  In that case, every Paley set is a Sidon set.  One may wonder to what extent this is still true. In the case that $G={\F}_2$ and $\psi$ being the reduced word length (or $q$-length defined in (\ref{qlength})), every length-lacunary set is a Paley set and a completely $\Lambda(p)$ set for all $2<p<\infty$ as showed in this article, the question is 
\medskip

{\bf Question}: Suppose $h_k\in \F_2$ is a length-lacunary sequence, e.g. $\frac {|h_{k+1}|}{h_k}>2$. Is $\{g_k\}$  a (completely) unconditional Sidon set? i.e. does there exists a constant $C_{\delta}$ such that
$$\|\sum_{k} \eps_k c_k\lambda_{h_k}\|\leq C_{\delta}\|\sum_{k}  c_k\lambda_{h_k}\|,$$
for any choices $\eps_k=\pm1$ and $c_k\in {\Bbb C}$ (resp. $K(H)$)?
 
 The transference method used in the work \cite{CXY13} is quite powerful  for the study of harmonic analysis on the quantum tori. A similar method applies to the free group case.  For $g\in\F_2$ in the form of (\ref{g}), let 
 $$|g|_z=|\sum_{i=1}^N j_i|^2+|\sum_{i=1}^N k_i|^2.$$ 
Then \begin{eqnarray}\label{psiz}
\psi_z:g\mapsto |g|_z
\end{eqnarray}
 is another conditionally negative function on $\F_2,$ and the unbounded linear operator $L_z:\la_g\mapsto \psi_z\la_g$ generates a symmetric Markov semigroup on the free group von Neumann algebra ${\cal L}(\F_2)$.  For $(z_1,z_2) \in {\Bbb T}^2$, let $\pi_z$ be the *-homomorphism on   ${\cal L}(\F_2)$ such that 
$$\pi_z(\la_a)=z_1\la_a, \pi_z(\la_b)=z_2\la_b.$$
Given $x\in {\cal L}(\F_2)$, viewing $\pi_z(x)$ as an operator valued function on ${\Bbb T}^2$, one can see that 
\begin{eqnarray}
\pi_z^{-1}(\Delta\otimes id)\pi_z(x) =L_z(x),\label{nabla}
\end{eqnarray}
with $ \Delta$ the Laplacian on ${\Bbb T}^2$.
This identity allows one to  transfer  classical results  to free groups with   $L_z$ taking the role of the Laplacian, including the corresponding Paley's inequality proved in this article. The disadvantage is that  this transference method cannot     produce any helpful information on the large subgroup $ker\psi_z$.  At below, we will show that the second part of this paper implies  a Paley's theory on   $ker\psi_z$.  

 Let us first recall a bi-invariant order on free groups  $\mathbb{F}_2$.
 For notational convenience, we denote   the free generators by $x_1, x_2$. We define the ring
$\Lambda= \mathbb{Z}[ A, B]$
to be the ring of formal power series in the non-commuting variables $A$ and $B$.
Let $\mu$ be the group homomorphism from $\mathbb{F}_2$ to the group generated by $\{1+A,1+B\}$ in $\Lambda$
such that:
\begin{eqnarray*}
&\mu(a)=1+ A, \ \mu(a^{-1})=1-A+A^2-A^3+\cdots
\\
&\mu(b)=1+B,\  \mu(b^{-1})=1-B+B^2-B^3+\cdots
\end{eqnarray*}
Then $\mu$ is injective. Denote by ``$\leq$" the dictionary order on $\Lambda$ assuming  $0\leq B\leq A$.  To be precise:  Write the element of $\Lambda$
in a standard form, with lower degree terms preceding higher degree terms,
and within a given degree list the terms in sequence according to   the
dictionary ordering assuming $0\leq B\leq A$.
Compare two elements of $\Lambda$ by writing them both in standard form and ordering
them according to the natural ordering of the  coefficients at the first term at
which they differ.
We then formally define the ordering on the free group $\mathbb{F}_2$ by setting
$$g\leq h~\mbox{in}~\mathbb{F}_2{\ \ \rm if\ \ } \mu(g)\leq\mu(h)~\mbox{in}~\Lambda.$$

Let $J_A(g)$ (resp. $J_B(g)$) be the coefficient of the $A$ term (resp. B term) in $\mu(g)$; and $J_{AB}(g)$ (resp. $J_{BA}(g))$ be the coefficient of  the $AB$ term (resp. $BA$ term)  in $\mu(g)$. More general, for any word  X of $A,B$, denote by $J_X(g)$ the coefficient of the $X$ term  in $\mu(g)$. 
Note that   $J_A(g)=J_A(a^{J_A(g)})$, and   $J_B(g)=J_B(b^{J_B(g)})$. For $g\in \F_2$ in the form of (\ref{g}), that is \begin{eqnarray}
g=a^{j_1}b^{k_1}\cdots a^{j_{N}}b^{k_{N}}
 \end{eqnarray} with $j_i,k_i\in {\Bbb Z}$ and $j_i \neq0$ for $1<i\leq N$ and $k_i\neq0$ for $1\leq i< N,$ we get by direct computations, 
 \begin{eqnarray}
 J_A(g)&=& \sum_{s=1}^N j_s,\ \  \ \ J_B(g)=\sum_{s=1}^N k_s,\label{dJA} \\
 J_{AB}(g)&=&\sum_{1\leq s\leq t \leq N} j_s k_t,  \ J_{BA}(g)=\sum_{1\leq t<s \leq N} j_s k_t. \label{dJAB}
 \end{eqnarray}
From (\ref{dJA}), (\ref{dJAB}), we see that $$J_{AB}(g)+J_{BA}(g)=J_A(g)J_B(g).$$ Using that $\mu$ is a group homomorphism, we have \begin{eqnarray}
J_A(gh)=J_A(g)+J_A(h)\label{JA}\\
J_{AB}(gh)=J_A(g)J_B(h)+J_{AB} (g)+J_{AB}(h)\label{JAB},
\end{eqnarray} 
 
Let 
\begin{eqnarray*}
\F_2^0&=&ker\psi_z=\{g\in \F_2; J_A(g)=J_B(g)=0\},\\
\F_2^{00}&=&\{g\in \F^0_2;  J_{AB}(g)=0\}=\{g\in \F^0_2;  J_{AB}(g)=J_{BA}(g)=0\}.
\end{eqnarray*} 
Then $\F_2^0, \F_2^{00}$ are   subgroups because of (\ref{JA}), (\ref{JAB}), and $\F_2^0=ker \psi_z$ with $\psi_z$ defined in (\ref{psiz}).  For $g\in \F_2^0$, $g>e$ if $J_{AB}(g)>0$ since $J_{AA}(g)=0$.
 Recall we say a sequence of $\ell_n\neq0\in{\Bbb Z}$ is lacunary if   there exists a $\delta>1$ such that $\inf_n\frac{\ell_{n+1}}{\ell_{n}}\geq \delta.$
We then get the following property by definition.
\begin{prop}\label{lastprop}
Given a sequence $g_n\in \F_2$, then  $E=\{g_n, n\in{\Bbb N}\}$  is a lacunary subset of $\mathbb{F}_2$ if any of the following holds
\begin{itemize}
\item The sequence $J_A(g_n)\in {\Bbb Z}$ is lacunary.
 \item  $J_A(g_n)=0$ for all $n$ and  the sequence $J_B(g_n)  \in {\Bbb Z}$ is lacunary.
 \item $J_A(g_n)=J_B(g_n)=0$ for all $n$, and  $J_{AB}(g_n)$ is lacunary. 
 \end{itemize}
\end{prop}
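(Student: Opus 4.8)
The plan is to derive all three cases from one elementary estimate --- a lacunary sequence of positive integers with ratio at least $\delta>1$ has at most $C_\delta:=\lfloor\log 2/\log\delta\rfloor+1$ terms in any interval $[m,2m]$ --- using the coefficient functionals $J_A,J_B,J_{AB}$ to translate between the order on $\F_2$ and such intervals.

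Two preliminary facts are needed. Algebraically: $J_A$ and $J_B$ are group homomorphisms $\F_2\to\Z$ (by (\ref{JA}), and the same computation with $B$ in place of $A$), while $J_{AB}$ restricted to $\F_2^{0}=\ker J_A\cap\ker J_B$ is a homomorphism into $\Z$ (by (\ref{JAB}), the cross term $J_A(g)J_B(h)$ vanishing there); consequently $J_A(g^2)=2J_A(g)$, $J_B(g^2)=2J_B(g)$, and $J_{AB}(g^2)=2J_{AB}(g)$ for $g\in\F_2^0$. Moreover, composing $\mu$ with the ring homomorphism $B\mapsto0$ (resp. $A\mapsto0$) on $\Lambda$ shows that $g\mapsto(1+A)^{J_A(g)}$ (resp. $g\mapsto(1+B)^{J_B(g)}$) annihilates $\F_2^0$, so every monomial of $\mu(g)-1$ contains both $A$ and $B$ once $g\in\F_2^0$; together with $J_{AB}+J_{BA}=J_AJ_B$ this shows that on $\F_2^0$ the only coefficient of $\mu(g)-1$ of degree $\le2$ that can be nonzero is $J_{AB}(g)$. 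Order-theoretically: unwinding the definition of ``$\le$'' (and using the normalization recorded just before the statement, by which $g>e$ whenever $g\in\F_2^0$ and $J_{AB}(g)>0$), on $\F_2$ the first relevant coefficient of $\mu(\cdot)-1$ is $J_A$, on $\ker J_A$ it is $J_B$, and on $\F_2^0$ it is $J_{AB}$; hence on each of these subgroups $H$, with distinguished homomorphism $J$, one has $J(g)>0\Rightarrow g>e$, and therefore (applying this to $g^{-1}$ and using totality of the order) $J$ is order-monotone on $H$: if $h\le h'$ with $h,h'\in H$ then $J(h^{-1}h')\ge0$, i.e. $J(h)\le J(h')$.

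Granting these, the counting is uniform across the three cases. Let $\ell_n$ denote $J_A(g_n)$, $J_B(g_n)$, $J_{AB}(g_n)$ in cases 1, 2, 3 respectively; by hypothesis $(\ell_n)$ is lacunary, and after replacing $(E,(\ell_n))$ by $(E^{-1},(-\ell_n))$ if necessary --- which preserves the case hypothesis and has $N(E^{-1})=N(E)$ --- we may assume all $\ell_n>0$. Then $g_n>e$ for every $n$, so $E_-=\emptyset$ and $N(E)=N(E_+)$. Now fix $g\in G_+$ with $L_g\cap E\ne\emptyset$ and choose $g_{n_0}\in L_g\cap E$. In cases 2 and 3, monotonicity of $J_A$ applied to $g\le g_{n_0}\le g^2$ together with $J_A(g_{n_0})=0$ forces $J_A(g)=0$; in case 3, monotonicity of $J_B$ then forces $J_B(g)=0$. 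Thus $g$ lies in the subgroup $H$ carrying the relevant homomorphism $J$, and for every $g_{n'}\in L_g\cap E$ monotonicity yields $J(g)\le\ell_{n'}\le J(g^2)=2J(g)$; hence $J(g)\ge1$ and $\ell_{n'}\in[J(g),2J(g)]$, so by the elementary estimate $\#(L_g\cap E)\le C_\delta$. Since $g$ was arbitrary, $N(E_+)\le C_\delta<\infty$, i.e. $E$ is lacunary.

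The real obstacle is the order-theoretic claim --- pinning down the somewhat ad hoc dictionary order on $\Lambda$ precisely enough to confirm that $J_A$ (then $J_B$ on $\ker J_A$, then $J_{AB}$ on $\F_2^0$) is genuinely the coefficient deciding comparison with $e$ on the relevant subgroup, which rests on the vanishing of all lower-degree coefficients there. The accompanying sign bookkeeping (the orientation $0\le B\le A$ versus the sign of $J_{AB}$) is harmless, being absorbed by the symmetry $E\leftrightarrow E^{-1}$.
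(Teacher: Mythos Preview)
Your argument is correct and is precisely the verification that the paper leaves implicit: the paper's ``proof'' of Proposition~\ref{lastprop} is the single line ``We then get the following property by definition,'' and what you have written is exactly the natural unpacking of that phrase. The key observations you isolate---that $J_A$ (resp.\ $J_B$ on $\ker J_A$, resp.\ $J_{AB}$ on $\F_2^0$) is an order-monotone homomorphism into $\Z$, so that $g\le g_n\le g^2$ forces $\ell_n\in[J(g),2J(g)]$ for the relevant $J$, together with the vanishing of lower-degree coefficients on the appropriate subgroup---are exactly what ``by definition'' is pointing to, and your counting bound $C_\delta$ is the obvious one. There is no alternative route being taken here; you have simply supplied the details the paper omits.
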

\noindent For instance,   $\{x_1^{2^i}x_2^{k_i}\in \mathbb{F}_2: i,  k_i\in \mathbb{N}_+\}$  
and  $\{x_1^{2^k}x_2^{2^k}x_1^{-2^k} x_2^{-2^k};  k\in {\Bbb N}\}$ are  lacunary subset of $\F_2$. 
   \begin{remark}Corollary \ref{cor4.4} implies that the  sets $E$ given in Proposition \ref{lastprop} are all completely $\Lambda(p)$ sets (\cite{Ha99}). 
\end{remark}

\begin{corollary}\label{corlast}Suppose $(g_k)_k\in \F^0_2$ is a sequence with $(J_{AB}(g_k))_k\in {\Bbb Z}$ lacunary.
Then for any $(c_k)_{k}\in S^p(H)$, we have 
\begin{eqnarray}
\|(c_k) \|^p_{S^p(\ell_{rc}^2)}\simeq  tr\otimes \tau|\sum_kc_k\otimes \la_{g_k} |^p\label{lastp}
\end{eqnarray}
for all $0<p<\infty$. Moreover, for $p=1$, we have  
\begin{eqnarray}
&&\|(c_k) \|_{S^1(\ell_{rc}^2)}\nonumber\\
&\simeq& \inf\{ tr\otimes\tau\big(| \sum_{J_{AB}(g)\geq0} \hat x(g)\otimes\la_{g}| +| \sum_{J_{AB}(g)<0} \hat x(g)\otimes\la_{g}|\big) \}\label{last1}
\end{eqnarray}
Here the infimum runs over all  $x\in L^1(\hat \F_2)\otimes S^1(H) $ with $\hat x(g_k)=c_k$.
\end{corollary}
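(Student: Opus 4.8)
The plan is to derive both equivalences from Proposition \ref{lastprop} together with Corollaries \ref{cor4.3} and \ref{cor4.4}; the only genuine work is a bookkeeping step that trades the order-theoretic analytic splitting for the explicit $J_{AB}$-splitting. First I would observe that, since $(g_k)\subset\F_2^0$ and $(J_{AB}(g_k))_k\in\Z$ is lacunary, the third bullet of Proposition \ref{lastprop} shows $E=\{g_k;\,k\in\N\}$ is a lacunary subset of $\F_2$. Hence (\ref{lastp}) follows immediately from Corollary \ref{cor4.4} applied with $\mathcal M=B(H)$ and $\mathcal N=\mathcal L(\F_2)\overline{\otimes}B(H)$: for a finite sequence one has $\|\sum_kc_k\otimes\la_{g_k}\|_p\simeq\|(c_k)\|_{S^p(\ell^2_{rc})}$ for all $0<p<\infty$; raising both sides to the $p$-th power gives (\ref{lastp}), and the general case follows by approximating $(c_k)\in S^p(\ell^2_{rc})$ with finite truncations.

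For (\ref{last1}) I would start from Corollary \ref{cor4.3} (again with $\mathcal M=B(H)$), which gives
\[
\|(c_k)\|_{S^1(\ell^2_{rc})}\simeq\inf\{\tau_G\otimes tr(|x_+|+|x_-|):\ x\in ReH^1(\mathcal N),\ \hat x(g_k)=c_k\ \forall k\},
\]
where $x_+=\sum_{g\ge e}\hat x(g)\otimes\la_g$ and $x_-=\sum_{g<e}\hat x(g)\otimes\la_g$ refer to the bi-invariant order on $\F_2$. It remains to show this infimum is comparable to the one in (\ref{last1}), which uses $x^{(+)}=\sum_{J_{AB}(g)\ge0}\hat x(g)\otimes\la_g$ and $x^{(-)}=\sum_{J_{AB}(g)<0}\hat x(g)\otimes\la_g$ instead. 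The key point is that the two splittings coincide once the Fourier support is confined to $S:=\F_2^0\setminus\F_2^{00}=\{g:\,J_A(g)=J_B(g)=0\ne J_{AB}(g)\}$: for $g\in\F_2^0$ one computes $J_{AA}(g)=\binom{J_A(g)}{2}=0$ and $J_{BB}(g)=\binom{J_B(g)}{2}=0$, so whether $g\gtrless e$ is decided at the degree-two coefficients $J_{AB}(g)$ and $J_{BA}(g)=-J_{AB}(g)$, hence for $g\in S$ it is governed by the sign of $J_{AB}(g)$; consequently $\{x_+,x_-\}=\{x^{(+)},x^{(-)}\}$ whenever $\supp\hat x\subset S$, and the two functionals agree on such $x$.

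To reduce both infima to this situation I would use that $\F_2^0$ and $\F_2^{00}$ are subgroups of $\F_2$, so $\mathcal N$ carries trace-preserving conditional expectations $\mathcal E_0$ and $\mathcal E_{00}$ onto $\mathcal L(\F_2^0)\overline{\otimes}B(H)$ and $\mathcal L(\F_2^{00})\overline{\otimes}B(H)$ respectively, and $P_S:=\mathcal E_0-\mathcal E_{00}$ is exactly the Fourier projection onto $\overline{\mathrm{span}}\{\la_g\otimes1:\,g\in S\}$. Since $\mathcal E_0$, $\mathcal E_{00}$ and the four maps $x\mapsto x_\pm$, $x\mapsto x^{(\pm)}$ are all Fourier multipliers by indicator functions of subsets of $\F_2$, they commute; combined with the $L^1$-contractivity of $\mathcal E_0$ and $\mathcal E_{00}$ this makes them contractive for both functionals $x\mapsto\tau_G\otimes tr(|x_+|+|x_-|)$ and $x\mapsto\tau_G\otimes tr(|x^{(+)}|+|x^{(-)}|)$, and keeps $ReH^1(\mathcal N)$ invariant; hence $P_S$ has norm at most $2$ for both, and since every $g_k$ lies in $S$ it preserves the constraint $\hat x(g_k)=c_k$. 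Therefore each of the two infima equals, up to a factor $2$, the corresponding infimum over $x$ with $\supp\hat x\subset S$; by the previous paragraph these restricted infima coincide, and combining with Corollary \ref{cor4.3} yields (\ref{last1}).

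The step I expect to be the main obstacle is this last reduction: one has to check carefully that $\mathcal E_0$ and $\mathcal E_{00}$ interact correctly with the two different analytic projections and remain inside $ReH^1(\mathcal N)$, and that the discrepancy between Rudin's order-splitting delivered by Corollary \ref{cor4.3} and the explicit $J_{AB}$-splitting appearing in the statement is supported entirely on $\F_2^{00}$, where it is annihilated by $P_S$. Everything else --- Proposition \ref{lastprop}, Corollaries \ref{cor4.3} and \ref{cor4.4}, and the finite-to-infinite approximation --- is routine.
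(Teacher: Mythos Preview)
Your proof is correct and follows essentially the same route as the paper: both derive (\ref{lastp}) from Proposition \ref{lastprop} and Corollary \ref{cor4.4}, and for (\ref{last1}) both use the conditional expectations onto the subgroups $\F_2^0$ and $\F_2^{00}$ to reduce the Fourier support to $S=\F_2^0\setminus\F_2^{00}$, where the bi-invariant order splitting and the $J_{AB}$-splitting coincide, and then invoke Corollary \ref{cor4.3}. The only cosmetic difference is that the paper proves just the nontrivial direction ``$\lesssim$'' (the other being immediate from (\ref{lastp})), while you phrase the argument symmetrically by showing both infima agree, up to a factor $2$, with the common restricted infimum over $\supp\hat x\subset S$; the underlying mechanism is identical.
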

\begin{proof} (\ref{lastp}) follows from Corollary \ref{cor4.4}.
For (\ref{lastp}),   we only need to prove the relation ``$\lesssim$", the other direction is trivial.   Since $\F_2^{0}$ and $\F_2^{00}$ are   subgroups, the projection $P_0$ (and $P_{00}$) onto $L^1(\hat \F_2^0)$ (and $L^1(\hat \F_2^0)$)  is completely contractive. 
Given   $x\in L^1(\hat \F_2)\otimes S^1(H)$ with $\hat x(g_k)=c_k$, let 
$y=P_{0}x-P_{00}x$ then we still have  $\hat y(g_k)=c_k$. 
By Corollary \ref{cor4.3}, we have
\begin{eqnarray*}
&&\|(c_k)_{1\leq k\leq n} \|_{S^1(\ell_{rc}^2)}\\
&\lesssim&   tr\otimes\tau| \sum_{g\geq e} \hat y(g)\otimes\la_{g}| +tr\otimes\tau| \sum_{g<e} \hat y(g)\otimes\la_{g}|\\
  &=& tr\otimes\tau| \sum_{g\in \F_2^0, J_{AB}(g)>0} \hat x(g)\otimes\la_{g}| +tr\otimes\tau| \sum_{g\in\F_2^0,J_{AB}(g)<0} \hat x(g)\otimes\la_{g}|\\
  &\leq& tr\otimes\tau\big(| \sum_{g\in \F_2^0, J_{AB}(g)\geq 0} \hat x(g)\otimes\la_{g}| +|P_{00}x|+| \sum_{g\in\F_2^0,J_{AB}(g)<0} \hat x(g)\otimes\la_{g}|\big)\\
  &\leq& 2tr\otimes\tau(|  \sum_{g\in \F_2^0, J_{AB}(g)\geq0} \hat x(g)\otimes\la_{g}| +| \sum_{g\in\F_2^0,J_{AB}(g)<0} \hat x(g)\otimes\la_{g}|)\\
  &\leq& 2tr\otimes\tau(|  \sum_{g\in \F_2, J_{AB}(g)\geq0} \hat x(g)\otimes\la_{g}| +| \sum_{g\in\F_2,J_{AB}(g)<0} \hat x(g)\otimes\la_{g}|).
\end{eqnarray*} 
\end{proof}
 
\begin{remark} The associated positive semigroup of any total  order  (including the one introduced above) on free groups is NOT represented by a regular language (\cite{HS17}). This increases the mystery of the associated   noncommutative Hardy spaces (norms).  Corollary \ref{corlast}  shows that  there are more transparent alternatives (e.g. (\ref{last1}) ) of the noncommutative real $H^1$-norm that may be used to formulate   the corresponding Paley's inequalities.
\end{remark}
\begin{remark} 
  Interested readers are invited  to prove  a similar theory by computing  $J_{AAB}(g)$.
\end{remark}

\section*{ Acknowledgments}

\noindent Mei is partially supported by NSF grant DMS 1700171. Han is partially supported by NSFC grant.
\bibliographystyle{amsplain}




\end{document}